\title{Shortest-support Multi-Spline Bases for Generalized Sampling\thanks{This work was supported in part by the European Research Council (H2020-ERC Project GlobalBioIm) under Grant 692726  and in part by the Swiss National Science Foundation, Grant 200020\_184646/1.}}
\author[1]{Alexis Goujon \thanks{alexis.goujon@epfl.ch}}
\author[1]{Shayan Aziznejad}
\author[3]{Alireza Naderi}
\author[1]{Michael Unser}
\affil[1]{\'Ecole polytechnique f\'ed\'erale de Lausanne}
\affil[3]{University of British Columbia}
\begin{document}
\maketitle
\begin{abstract}
Generalized sampling consists in the recovery of a function $f$, from the samples of the responses of a collection of linear shift-invariant systems to the input $f$. The reconstructed function is typically a member of a finitely generated integer-shift-invariant space that can reproduce polynomials up to a given degree $M$. While this property allows for an approximation power of order $(M+1)$, it comes with a tradeoff on the length of the support of the basis functions. Specifically, we prove that the sum of the length of the support of the generators is at least $(M+1)$. Following this result, we introduce the notion of shortest basis of degree $M$, which is motivated by our desire to minimize computational costs. We then demonstrate that any basis of shortest support generates a Riesz basis. Finally, we introduce a recursive algorithm to construct the shortest-support basis for any multi-spline space. It provides a generalization of both polynomial and Hermite B-splines. This framework paves the way for novel applications such as fast derivative sampling with arbitrarily high approximation power.
\end{abstract}

\section{Introduction}

\subsection{Generalized Sampling in Shift-Invariant Spaces}

Since the formulation of Nyquist-Shannon's celebrated sampling theorem \cite{Shannon1949}, the reconstruction of a function  from discrete measurements has been extended in many ways \cite{Jerri1977,Unser2000}. In particular, Papoulis proposed the framework of generalized sampling \cite{Papoulis1977}, where he showed that any bandlimited function $f$ is uniquely
determined by the sequences of discrete measurements (generalized samples)
\begin{equation}
\label{generalizedsampling}
g_n(kT) = (h_n*f)(kT) = \dotprod{f}{\psi_n(\cdot-kT)}, \quad n=1,...,N, \quad k\inZ,
\end{equation}
where $(g_n(t))_{n=1,...,N}$ are the outcome of $N$ linearly independent systems applied to $f$. The sampling is assumed to proceed at $1/N$ the Nyquist rate ({\it i.e.}, $T = NT_{\mathrm{Nyq}} = 2N\pi/\omega_{\max}$, where $\omega_{\max}$ is the maximum frequency of $f$). The functions $\psi_n(t) = h_n(-t)$, $t\inR$, are called the analysis functions. They are the time-reversed versions of the impulse responses. The sampling theorem was also generalized to many different function spaces such as integer-shift-invariant spaces \cite{Aldroubi1994,Unser1994}, including spline spaces \cite{Hummel1983,Unser1992,Aldroubi1992}. Following this extension and Papoulis' theory, Unser and Zerubia introduced a framework to perform generalized sampling without the bandlimited constraint \cite{Unser1998,Unser1997a} which includes important cases such as interlaced and derivative sampling in spline spaces. In this paper, we adopt the same framework and propose to reconstruct a function $f$ from discrete samples $g_n(k), k=1,...,N$ in an integer-shift-invariant space generated by a finite collection of generators as in some recent works \cite{Garcia2008,Pohl2012,Radha2019}. The structure of such reconstruction spaces has been thoroughly studied \cite{DeBoor1994a,Aldroubi1996,Grochenig2018} and there exist theoretical results that lead to the critical choice of relevant generating functions \cite{DeBoor1994}. As a minimal requirement to get a good approximation space, the generating functions should satisfy jointly the partition-of-unity condition \cite{DeBoor1985}. In addition, there exists a tradeoff between the approximation power of the space and the size of the support of the generating functions \cite{Blu2001}.

\subsection{Polynomial Splines}
A polynomial spline is a piecewise polynomial function defined over the real line. Of special interest are the splines of degree $n$ because they provide one free parameter per segment. They are defined by distinct knots and polynomial pieces of degree $n$ that are connected smoothly so that the global function has continuous derivatives up to order $(n-1)$. The splines whose knots are uniformly spaced are called cardinal splines and they are relevant to many applications such as image processing \cite{Unser1999}. In the 50s, Isaac Schoenberg laid the foundation of cardinal splines \cite{Schoenberg1973,schoenberg1967spline} when he showed that the set $S_n$ of cardinal splines of degree $n$ could be generated by a single function \cite{DeBoor1976}, the B-spline of degree $n$. In this paper we will consider the causal B-spline and denote it by $\beta_+
^n$. This simple building block is also the shortest nonzero spline of degree $n$. Interestingly, the B-splines can be constructed recursively with the relation
\begin{equation}
\beta_+^{n+1} = \beta_+^n*\beta_+^0,
\end{equation}
starting from $\beta_+^0$, which is the rectangular window over $[0,1)$
\begin{equation}
\beta_+^0(x) = \begin{cases}1 ,& 0\leq x<1\\
0, & \text{otherwise.}
\end{cases}
\end{equation}

The convolution by $\beta_+^0$ can be decomposed in two successive operations: an integration (which transforms a spline of degree $n$ into a spline of degree $(n+1)$) followed by a finite difference (which gives back a compactly supported function). Indeed, $(f*\beta_+^0)(x) = \Delta\{\int_{-\infty}^x f(t)\dint t\}$, where $\Delta\{f\}=(f(\cdot)-f(\cdot-1))$ is the finite difference of $f$. Along with their great reproducing properties and shortest support, B-splines allow an efficient and practical implementation, which is exploited in many fields \cite{DeBoor1972,DeBoor1980,Unser1993,Unser1993a}.

\subsection{Multi-Splines}
To perform generalized sampling, it is natural to look at multi-spline spaces since they offer additional degrees of freedom. A cardinal multi-spline space is defined as the sum of $N\inN$ spline spaces: $S_{\mathbf{n}} = S_{n_1}+\cdots+S_{n_N}$, $\mathbf{n} = ({n_1,...,n_{N}})$ and $n_1<\cdots<n_N\inN$. From now on, any spline will be assumed to be a cardinal spline unless stated otherwise. It is worth noting that, in the case of consecutive spaces specified by $n_k = n_1+(k-1)$, the resulting space is exactly the space of piecewise polynomials of degree $n_N$ that are in $C^{n_{1}-1}(\mathbb{R})$, the space of functions with $(n_{1}-1)$ continuous derivatives (see Proposition \ref{consecutiveMS}). Some multi-spline spaces have proved to be of great interest for derivative sampling, where the goal is to reconstruct a signal from the samples of the function and of its first-order derivative. We should mention the well-known bicubic Hermite splines $(h_1,h_2)$, first introduced by Schoenberg and Lipow in \cite{Lipow1973}. They constitute a basis of $S_2+S_3$ with the shortest support and provide the direct interpolation formula
\begin{equation}
\forall f\in S_2+S_3, \quad \forall x\inR:f(x) = \sum_{k\inZ}\left(f(k)h_1(x-k)+f^{'}(k)h_2(x-k)\right),
\end{equation}
where $f^{'}=f^{(1)}$ is the derivative of $f$.
The excellent approximation capabilities and minimal-support property of the Hermite splines \cite{Fageot2020} give a strong incentive to investigate more general multi-spline spaces. The bicubic Hermite splines are the backbone of many computer-graphics applications and closely linked to B\'ezier curves \cite{Farouki2012,Uhlmann2016,Conti2015,Conti2016,Romani2020}. Schoenberg and Lipow also found two fundamental functions to reconstruct any function in $S_4+S_5$  from its samples and the samples of its first-order derivative. Nonetheless, those functions are not well-suited to practical applications since they are not compactly supported.
\\Building on top of an impressive body of work from various communities, we propose a systematic study of shortest bases for any multi-spline space. In particular, the main goal is to generalize the concept of B-splines to any multi-spline space.
\\
\\The paper is organized as follows: in Section \ref{sec:Formulation}, we formulate the problem in the framework of finitely generated shift-invariant spaces. We then state the properties that relevant generating functions should satisfy. In Section \ref{sec:Theory}, we show that the conditions imposed can only be met if the sum of the support of the generating functions is large enough. In Section \ref{sec:Multispline}, we present a method to construct shortest-support bases for any multi-spline space. This has important implications in practice, which we illustrate in Section \ref{sec:Applications} where we give practical examples to implement generalized sampling with the new set of functions, including interpolation, derivative sampling, and a new way to envision Bézier curves.

\section{Formulation of the Problem}
\label{sec:Formulation}
Let $\boldsymbol{\phi} = (\phi_1,\phi_2,\ldots,\phi_N)$  be a finite collection of functions in $ L_2(\mathbb{R})$, Lebesgue's space of square-integrable functions. The integer-shift-invariant subspace of $\LDR$ generated by $\boldsymbol{\phi}$ is denoted by $\shiftspace{\boldsymbol{\phi}}$ and is defined as
\begin{equation}
\shiftspace{\boldsymbol{\phi}} = \shiftspace{\phi_1} + \shiftspace{\phi_2} + \cdots+ \shiftspace{\phi_N}
,\end{equation}
where
\begin{equation}
\shiftspace{\phi_n}=\overline{\mathrm{Span}}\left(\{\phi_n(\cdot-k)\}_{k\in\mathbb{Z}}\right) \subseteq L_2(\mathbb{R}),\quad n=1,\ldots,N.
\end{equation}

We shall not restrict ourselves to multi-spline spaces for now and rather consider finitely generated integer-shift-invariant spaces. To formulate the problem, we recall three properties of $\boldsymbol{\phi}$ that have been imposed in previous works for practical applications. Multi-spline spaces will then naturally stand out as practical and important reconstruction spaces (Sections III and IV).
\subsection{Riesz Basis}
\begin{definition}
The set of functions $\{\phi_n(\cdot-k):k\inZ, n=1,\ldots ,N\} \subset L_2(\mathbb{R})$ is said to be a Riesz basis with bounds $A, B\inR$ with $0<A\leq B<+\infty$ if, for any vector of square-summable sequences $\boldsymbol{c} = (c_1,...,c_N)\inlDZN$, we have that
\begin{equation}
A \norm{\boldsymbol{c}}_{\ell_2}\leq \norm{\sum_{k\inZ}\boldsymbol{c}[k]^T \boldsymbol{\phi}(\cdot-k)}_{\LDR}\leq B \norm{\boldsymbol{c}}_{\ell_2}
,
\end{equation}
where $ \|{\boldsymbol{c}}\|_{\ell_2}=\left( \sum_{n=1}^N \|{c}_n\|_{\ell_2}^2\right)^{\frac{1}{2}}$,  $\boldsymbol{\phi} = (\phi_1,\phi_2,\ldots,\phi_N)$ and where A and B are the tightest constants.
\end{definition}
When this property is satisfied, we say that $\boldsymbol{\phi}$ generates a Riesz basis.
The Riesz-basis property guarantees that any $f\in \shiftspace{\boldsymbol{\phi}}$ has the unique and stable representation
(\cite{Christensen2016})
\begin{equation}
f(\cdot) = \sum_{k\inZ}\boldsymbol{c}[k]^T \boldsymbol{\phi}(\cdot-k) = \sum_{k\inZ}\sum_{n=1}^N c_n[k] \phi_n(\cdot-k)
.
\end{equation}
This property is well characterized in the Fourier domain via the Gramian matrix-valued function
\begin{equation}
\label{eq:gramian}
\hat{\boldsymbol{G}}(\omega) = \sum_{k\inZ}\hat{\boldsymbol{\phi}}(\omega+2k\pi)\hat{\boldsymbol{\phi}}(\omega+2k\pi)^{H} =  \sum_{k\inZ}\dotprod{\boldsymbol{\phi}}{\boldsymbol{\phi}^{T}(\cdot -k)}\ee^{-\jj\omega k}
,\end{equation}
where the inner product is defined as $\dotprod{f}{g} = \int_{\mathbb{R}}f(t)g^*(t)\dint t$, $\phantom{g}^*$ is the complex conjugate operator, and $^{H}$ is the conjugate transpose operator. Equality (\ref{eq:gramian}) follows from Poisson's formula applied to the sampling at the integers of the matrix-valued autocorrelation function $t\mapsto \dotprod{\boldsymbol{\phi}}{\boldsymbol{\phi}^{T}(\cdot -t)} = (\boldsymbol{\phi}*\boldsymbol{\phi}^{H\vee})(t)$\cite{Unser2014}. The Fourier equivalent of the Riesz-basis condition is \cite{Aldroubi1996}
\begin{equation}
\label{eigenvalueRB}
0<A^2=\essinf_{\omega \in [0,2\pi)}\lambda_{\min}(\omega)\leq \esssup_{\omega \in [0,2\pi)}\lambda_{\max}(\omega) = B^2<+\infty
,\end{equation}
where $\lambda_{\min}(\omega)$ and $\lambda_{\max}(\omega)$ are the smallest and largest eigenvalues of $\hat{\boldsymbol{G}}(\omega)$.
\subsection{Reproducing Polynomials}
\begin{definition}
The space $\shiftspace{\boldsymbol{\phi}}$ is said to reproduce polynomials of degree up to $M$ if, for all $m = 0,1,...,M$, there exist vector sequences $\boldsymbol{c}_m$ (not necessarily in $(\lDZ)^N$) such that
\footnote{for $m=0$, we use in (\ref{eq:polyrepro}) the convention that $x^m = 1$, including for $x=0$.}
\begin{equation}
\label{eq:polyrepro}
\forall x \inR,\quad x^m = \sum_{k\inZ}\boldsymbol{c}_m[k]^T \boldsymbol{\phi}(x-k)
.
\end{equation}
\end{definition}
Strang and Fix showed that the property of the reproduction of polynomials of degree up to $M$ is directly linked to the approximation power of the reconstruction space \cite{Strang1971}. More precisely, let
\begin{equation}
\mathrm{S}_h(\boldsymbol{\phi}) = \{f(\cdot/h):f\in \shiftspace{\boldsymbol{\phi}}\}
\end{equation}
be the $h$-dilate of $\shiftspace{\boldsymbol{\phi}}$. The space $\shiftspace{\V \phi}$ is said to have an approximation power of order $M$ if any sufficiently smooth and decaying function can be approached by an element of $\mathrm{S}_h(\V \phi)$ with an error decaying as $O(h^{M})$. The so called “Strang-Fix conditions" give sufficient conditions to have a space with an approximation power of order $M$ \cite{Fageot2020,DeBoor1998,Unser1997}. In particular, for compactly supported and integrable generating functions, it is sufficient to have the space $\shiftspace{\V \phi}$ reproduce polynomials of degree up to $(M-1)$. A straightforward implication is that the spline space $S_n$ has an approximation power of order $(n+1)$ since
\begin{enumerate}[label=(\roman*)]
\item it can reproduce polynomials of degree up to $n$;
\item it can be generated by the compactly supported function $\beta_+
^n$.
\end{enumerate}
The multi-spline space $S_{n_1}+\cdots+S_{n_N}$ inherits the highest approximation power of its spline spaces. Its approximation power is $(n_N+1)$, since $S_{n_N}\subset S_{n_1}+\cdots+S_{n_N}$.
\subsection{Compact Support}
The evaluation of $f\in \shiftspace{\boldsymbol{\phi}}$ at a given $x\inR$ from its discrete representation $\boldsymbol{c}\inlDZN$ requires a number of computations more or less proportional to the support size of $\boldsymbol{\phi}$. So, ideally, we want to minimize the support of $\boldsymbol{\phi}$ while maintaining a good approximation power \cite{Blu2001}. The support of a function $f\in \LDR$ is written as $\supp{f}=\overline{\{x\inR:f(x)\neq 0\}}$. If it is a compact subset of $\mathbb{R}$, then the support size is defined as $
\suppsize{f} =\int_{\mathbb{R}}\mathbbm{1}_{\supp{f}}(t)\dint t
$, where $\mathbbm{1}_{\supp{f}}$ is the indicator function of $\supp{f}$. For a finite collection of compactly supported functions $\V \phi = (\phi_{1},...,\phi_{N}),$ the natural extension for the support size is
\begin{equation}
\suppsize{\boldsymbol{\phi}} = \sum_{n=1}^N \suppsize{\phi_n}.
\end{equation}
In Section \ref{sec:Theory}, we present theoretical results that clarify the relation between the desired properties.
\section{Shortest Bases}
\label{sec:Theory}
For a single generator $\phi$ such that $\shiftspace{\phi}$ reproduces polynomials of degree up to $M$, Schoenberg stated  that $\suppsize{\phi}\geq M+1$ \cite{Schoenberg1973}. The result was proved in \cite{Aziznejad2019} for $N=2$. We now extend the proof to any $N\inN\setminus \{0\}$.
\begin{theorem}[Minimal support]
\label{MinimalSupport}
If $\shiftspace{\boldsymbol{\phi}} = \shiftspace{\phi_1,\phi_2,\ldots,\phi_N}$ reproduces polynomials of degree up to $M$, then $\suppsize{\boldsymbol{\phi}} \geq M+1$. In addition, if there is equality, then
\begin{equation}
\label{minimalsupportcharacterization}
    \sum_{k\inZ} \sum_{n=1}^N  \mathbbm{1}_{\supp{\phi_n}}(x+k) = \suppsize{\boldsymbol{\phi}} \quad \text{for almost every } x\inR.
\end{equation}
\end{theorem}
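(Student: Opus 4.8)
The plan is to read the left-hand side of (\ref{minimalsupportcharacterization}) as a pointwise \emph{multiplicity} and to bound it from below. Define
\[
L(x) = \sum_{k\inZ}\sum_{n=1}^N \mathbbm{1}_{\supp{\phi_n}}(x+k),\qquad x\inR,
\]
which counts the integer shifts $\phi_n(\cdot-k)$ whose support meets $x$. Since the $N$ generators are compactly supported, $L$ is finite and $1$-periodic, and folding each shifted support back into $[0,1)$ yields the periodization identity
\[
\int_0^1 L(x)\dint x = \sum_{n=1}^N\int_{\mathbb{R}}\mathbbm{1}_{\supp{\phi_n}}(x)\dint x = \suppsize{\boldsymbol{\phi}}.
\]
With this identity in hand, both assertions reduce to the single pointwise estimate $L(x)\geq M+1$.

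The core step is a local dimension count. Fix $x_0$ and a shrinking interval $I_\varepsilon = (x_0-\varepsilon,x_0+\varepsilon)$. Because each generator is compactly supported, only finitely many shifts $\phi_n(\cdot-k)$ fail to vanish almost everywhere on $I_\varepsilon$; restricting the reproduction formula (\ref{eq:polyrepro}) to $I_\varepsilon$ therefore collapses the (possibly non-$\ell_2$) series into a \emph{finite} linear combination, so each monomial $x^m$, $m=0,\dots,M$, agrees almost everywhere on $I_\varepsilon$ with a combination of these active shifts. The restrictions of $1,x,\dots,x^M$ to any interval of positive length are linearly independent, so the span of the active shifts on $I_\varepsilon$ has dimension at least $M+1$, forcing at least $M+1$ active shifts there. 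As $\varepsilon\downarrow 0$ this finite, non-increasing integer count stabilizes; the shifts active on all small $I_\varepsilon$ therefore number at least $M+1$, and each of them has $x_0$ in the closure of its nonzero set, i.e. $x_0\in\supp{\phi_n}+k$, so that $L(x_0)\geq M+1$.

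Combining the two ingredients gives $\suppsize{\boldsymbol{\phi}} = \int_0^1 L \geq M+1$. For the equality case, $\suppsize{\boldsymbol{\phi}} = M+1$ forces $\int_0^1\bigl(L(x)-(M+1)\bigr)\dint x = 0$ with a nonnegative integrand, whence $L(x)=M+1=\suppsize{\boldsymbol{\phi}}$ for almost every $x$, which is precisely (\ref{minimalsupportcharacterization}). I expect the delicate point to be the last part of the pointwise bound: the multiplicity $L$ is built from the \emph{closed} support $\supp{\phi_n}$, whereas the dimension count only sees shifts that are essentially nonzero near $x_0$. Since a function may vanish on a positive-measure portion of its closed support, one must argue that these essentially active shifts are no more numerous than those counted by $L(x_0)$; this is exactly what the ``closure of the nonzero set'' inclusion above supplies, and the borderline points lying on the boundary of some shifted support form a null set that is harmlessly absorbed into the almost-everywhere statements.
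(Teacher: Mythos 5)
Your proof is correct, and it rests on the same two pillars as the paper's — the periodized multiplicity function $\lambda(x)=\sum_{k\inZ}\sum_{n=1}^N\mathbbm{1}_{\supp{\phi_n}}(x+k)$, whose average over one period equals $\suppsize{\boldsymbol{\phi}}$, and a dimension count against the $(M+1)$ linearly independent monomials — but it obtains the crucial lower bound on $\lambda$ by a genuinely different route. The paper never proves a pointwise estimate: it uses a measure-theoretic pigeonhole to extract a subset $B\subset[0,1]$ of positive measure on which the entire active index set $\Lambda(x)$ is \emph{constant} and on which $\lambda$ does not exceed its mean, performs one global dimension count on $B$, and then handles the equality case separately by contraposition (if $\lambda$ is not a.e.\ constant, $B$ can be chosen with $\lambda$ strictly below the mean, forcing $\suppsize{\boldsymbol{\phi}}>M+1$). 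You instead prove the stronger statement $\lambda(x_0)\geq M+1$ at \emph{every} point, via a local argument on shrinking intervals $I_\varepsilon$ combined with the stabilization of the nested finite sets of essentially active shifts; both the inequality and the equality characterization then follow in one stroke from $\int_0^1\bigl(\lambda-(M+1)\bigr)=\suppsize{\boldsymbol{\phi}}-(M+1)\geq 0$ with a nonnegative integrand. What your route buys is a uniform, everywhere-valid bound and an immediate equality case; what it costs is the limiting argument and the closed-support versus essential-support distinction, which you correctly flag and resolve in the right direction (essential activity on every $I_\varepsilon$ places $x_0$ in the closed support, so the essentially active shifts form a subset of those counted by $\lambda(x_0)$). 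The paper's device of passing to a set where $\Lambda$ is literally constant sidesteps both delicacies at once. The only omission, a cosmetic one, is the trivial case of non-compactly-supported generators, which the paper dispatches in a single sentence before assuming, as you implicitly do, that all generators have compact support.
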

\begin{proof}
   If $\boldsymbol{\phi}$ is not compactly supported, then the inequality is clear. Now, we can assume that $\boldsymbol{\phi}$ is compactly supported.
   This implies that, for any $x\inR$, the sum $\sum_{k\inZ}\boldsymbol{c}[k]^T \boldsymbol{\phi}(x-k) = \sum_{k\inZ} \sum_{n=1}^N c_n[k]\phi_n(x-k)$ has only a finite number of nonzero terms that are identified by the set
   \begin{equation}
   \Lambda(x)= \left\{(n,k)\in \{1,\ldots,N\}\times \mathbb{Z} : \quad x \in \supp{\phi_n(\cdot-k)}\right \}
   ,
   \end{equation}
   
   and its cardinality
    \begin{equation}
    \label{eq:cardinality}
    \lambda(x) = \#(\Lambda(x))=\sum_{k\inZ} \sum_{n=1}^N  \mathbbm{1}_{\supp{\phi_n}}(x+k) \inN
    .
    \end{equation}
    Equation (\ref{eq:cardinality})  follows from the fact that $\mathbbm{1}_{\supp{\phi_n}}(x+k)$ is 1 if and only if $(n,k)\in \Lambda (x)$ and 0 otherwise.
   The function $x\mapsto \lambda(x)$ is 1-periodic and bounded because $\supp{\phi_n}$ are compact subsets of $\mathbb{R}$. Its average over one period reads (note that the sums are in fact all finite)
   \begin{align}
   \label{eq:fubini}
       \overline{\lambda} &= \int_{0}^1 \sum_{n=1}^N \sum_{k\inZ} \mathbbm{1}_{\supp{\phi_n}}(x+k) \dint x
      =  \sum_{n=1}^N \sum_{k\inZ} \int_{0}^1 \mathbbm{1}_{\supp{\phi_n}}(x+k) \dint x \\
       &= \sum_{n=1}^N \int_{-\infty}^{\infty} \mathbbm{1}_{\supp{\phi_n}}(x) \dint x = \suppsize{\boldsymbol{\phi}}
  , \end{align}
where we applied Fubini's Theorem in (\ref{eq:fubini}).
   Because $\lambda$ is bounded and takes values in $\mathbb{N}$, it only takes a finite number of values. Consequently, there exists a set $\mathrm{A}\subset [0,1]$ of nonzero measure such that $\lambda$ is constant on A and no greater than its average, as in
\begin{equation}
       \forall x\in A : \quad \lambda(x) = \lambda_{A} \leq \overline{\lambda} = \suppsize{\boldsymbol{\phi}}
    .
\end{equation}
The function $\#(\Lambda)$ restricted to A is constant, but this does not imply that $\Lambda$ is constant on A. Noting that A is bounded and that the $\phi_n$ are compactly supported, the image of A under $\Lambda$, denoted by $\Lambda(A)$, is a finite set. Therefore, there exists $\mathrm{B}\subset \mathrm{A}\subset[0,1]$ of nonzero measure such that $\Lambda$ is constant on B. This means that the set $\shiftspace{\boldsymbol{\phi}}_{|B}$ of functions of $\shiftspace{\boldsymbol{\phi}}$ restricted to $B$ is spanned by $\lambda_A$ functions $(\phi_n(\cdot-k))_{(n,k)\in \Lambda(B)}$.
   \newline
   Moreover, due to the reproducing property, the polynomials of degree up to $M$ restricted to B form a linear subspace of $\shiftspace{\boldsymbol{\phi}}_{|B}$ whose dimension is $(M+1)$, because B is infinite. Then, we must have that $\lambda_A\geq M+1$ and, since $\lambda_A\leq \suppsize{\boldsymbol{\phi}}$, we deduce the announced bound $\suppsize{\V \phi}\geq M+1$.
\newline If $\lambda$ is not $a.e.$ constant, then $\mathrm{A}$ can be chosen so that $\lambda_A<\overline{\lambda} = \suppsize{\boldsymbol{\phi}}$ and $\shiftspace{\boldsymbol{\phi}}_{|B}$ is spanned by fewer than $\suppsize{\boldsymbol{\phi}}$ functions. The reproduction property implies that $\suppsize{\boldsymbol{\phi}} > M+1$. This means that the equality  $\suppsize{\boldsymbol{\phi}} = M+1$ is possible only if $\lambda$ is $a.e.$ constant.
\end{proof}
Following Theorem \ref{MinimalSupport}, we can introduce the central notion of shortest-support basis.
\begin{definition}
A collection of functions $\boldsymbol{\phi}\in (\LDR)^N$ is said to be a shortest-support basis of degree $M$ if $\shiftspace{\boldsymbol{\phi}}$ reproduces polynomials of degree up to $M$ with the shortest support, \textit{i.e.} with $\suppsize{\boldsymbol{\phi}} = M+1$.
\end{definition}

The qualifier of basis comes from Theorem \ref{MinimalSupport_RB}.
\begin{theorem}[Shortest support and Riesz basis]
\label{MinimalSupport_RB}
Any shortest basis generates a Riesz basis.
\end{theorem}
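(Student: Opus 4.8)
The plan is to establish the Fourier-domain characterization (\ref{eigenvalueRB}), that is, to show $0<\essinf_{\omega}\lambda_{\min}(\omega)$ and $\esssup_{\omega}\lambda_{\max}(\omega)<+\infty$ for the Gramian (\ref{eq:gramian}) of a shortest basis $\boldsymbol{\phi}$. The upper bound is immediate: a shortest basis satisfies $\suppsize{\boldsymbol{\phi}}=M+1<+\infty$, so every $\phi_n$ is compactly supported, only finitely many correlations $\dotprod{\phi_n}{\phi_m(\cdot-k)}$ are nonzero, and each entry of $\hat{\boldsymbol{G}}(\omega)$ is a trigonometric polynomial. Hence $\omega\mapsto\hat{\boldsymbol{G}}(\omega)$ is continuous and $2\pi$-periodic, $\lambda_{\max}$ is bounded, and $B<+\infty$ automatically. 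The same continuity collapses the lower bound into a pointwise statement: $\det\hat{\boldsymbol{G}}$ is a trigonometric polynomial, so a single zero would, by continuity, force $\essinf_{\omega}\lambda_{\min}(\omega)=0$. It therefore suffices to prove that $\hat{\boldsymbol{G}}(\omega_0)$ is nonsingular for every $\omega_0\inR$.

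I would argue this last point by contradiction, and here the equality characterization (\ref{minimalsupportcharacterization}) from Theorem \ref{MinimalSupport} is the crucial ingredient. Suppose $\hat{\boldsymbol{G}}(\omega_0)\boldsymbol{v}=\boldsymbol{0}$ for some $\boldsymbol{v}=(v_1,\ldots,v_N)\neq\boldsymbol{0}$. Then
\[
0=\boldsymbol{v}^H\hat{\boldsymbol{G}}(\omega_0)\boldsymbol{v}=\sum_{k\inZ}\Bigl|\,\sum_{n=1}^N v_n^{*}\,\hat{\phi}_n(\omega_0+2k\pi)\Bigr|^2,
\]
which forces $\sum_{n=1}^N v_n^{*}\hat{\phi}_n(\omega_0+2k\pi)=0$ for all $k\inZ$. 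By Poisson's formula this is exactly the assertion that the bounded (but in general \emph{not} square-summable) coefficient sequence $c_n[k]=v_n^{*}\,\ee^{\jj\omega_0 k}$ lies in the kernel of the synthesis map, i.e. $\sum_{k\inZ}\sum_{n=1}^N c_n[k]\,\phi_n(x-k)=0$ for almost every $x\inR$.

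To reach a contradiction I would extract local linear independence directly from (\ref{minimalsupportcharacterization}), reusing the combinatorics of the proof of Theorem \ref{MinimalSupport}. Since the $\phi_n$ are compactly supported, $x\mapsto\Lambda(x)$ takes only finitely many values on $[0,1)$, so up to a null set $[0,1)$ splits into finitely many positive-measure pieces on each of which $\Lambda$ equals a fixed index set $S$; by (\ref{minimalsupportcharacterization}) we have $\#S=\lambda=M+1$ almost everywhere. On such a piece $B$, every term with $(n,k)\notin S$ vanishes identically, so $\shiftspace{\boldsymbol{\phi}}_{|B}$ is spanned by the $M+1$ functions $(\phi_n(\cdot-k))_{(n,k)\in S}$; but the reproduced polynomials of degree up to $M$ already contribute an $(M+1)$-dimensional subspace of $\shiftspace{\boldsymbol{\phi}}_{|B}$ (a nonzero polynomial cannot vanish on the infinite set $B$), which forces these $M+1$ generators to be linearly independent on $B$. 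Restricting the kernel relation to $B$ leaves $\sum_{(n,k)\in S}c_n[k]\,\phi_n(\cdot-k)=0$ on $B$, so local linear independence gives $c_n[k]=v_n^{*}\ee^{\jj\omega_0 k}=0$, hence $v_n=0$, for every $(n,k)\in S$. Letting $B$ range over all the pieces, and using that each nontrivial $\phi_n$ is active on some piece, yields $\boldsymbol{v}=\boldsymbol{0}$, the desired contradiction; thus $\hat{\boldsymbol{G}}(\omega_0)$ is nonsingular and $A>0$.

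The step I expect to be the main obstacle is precisely the passage from $\ell_2$- to bounded-sequence independence: the lower Riesz bound can only fail at a frequency $\omega_0$ where a non-summable dependency $c_n[k]=v_n^{*}\ee^{\jj\omega_0 k}$ is hidden, and ordinary $\ell_2$-linear independence would not detect it. The role of the equality characterization (\ref{minimalsupportcharacterization}) is exactly to upgrade independence to the \emph{local} form, which tolerates arbitrary (in particular bounded) coefficients and so closes this gap; verifying the measurable partition into finitely many constant-$\Lambda$ pieces and that each generator is genuinely active are the routine but necessary bookkeeping steps.
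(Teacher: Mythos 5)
Your proof is correct, but it takes a genuinely different route from the paper's. The paper factors the problem through the \emph{slices}: Lemma \ref{LemmaRB} shows that linear independence of the nonzero unit-length slices implies the Riesz property, by rewriting $(\hat{\boldsymbol{G}}(\omega))_{mn}$ as the inner product of the slice-packed functions $\tilde{\phi}_m(\omega,\cdot)$ and $\tilde{\phi}_n(\omega,\cdot)$; Lemma \ref{lm:sliceslinearlyindep} shows that a shortest basis must have independent slices, because a linearly dependent slice could be removed to produce a generating system of strictly smaller total support that still reproduces degree-$M$ polynomials, contradicting the bound of Theorem \ref{MinimalSupport}. You instead work directly with the quadratic form of the Gramian: you convert a null vector of $\hat{\boldsymbol{G}}(\omega_0)$, via Poisson summation, into a modulated dependency $c_n[k]=v_n^{*}\ee^{\jj\omega_0 k}$ among the integer shifts, and you kill it using the equality characterization (\ref{minimalsupportcharacterization}) together with the dimension count from the proof of Theorem \ref{MinimalSupport} (the $M+1$ active shifts on each constant-$\Lambda$ piece must be linearly independent because they span a space containing the $(M+1)$-dimensional restricted polynomials). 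Both arguments finish identically: pointwise positivity of $\det\hat{\boldsymbol{G}}$, then continuity of a trigonometric polynomial on the compact set $[0,2\pi]$. What the paper's route buys is reusability — Lemma \ref{LemmaRB} is invoked again to show that $\boldsymbol{\beta}_{\mathbf{n}}$ is not a Riesz basis, and Lemma \ref{lm:sliceslinearlyindep} again in Lemma \ref{lm:MinimalGeneratingFunctions} — whereas your route is self-contained, actually puts the equality case (\ref{minimalsupportcharacterization}) to work (the paper proves it but never uses it), and makes explicit the genuinely useful point that the lower Riesz bound can only fail through a bounded, non-square-summable modulated dependency, which local linear independence rules out. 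One shared caveat: both proofs implicitly assume no generator is identically zero (a zero generator makes the Gramian singular without affecting the support count), and you at least flag this by restricting to the ``nontrivial'' $\phi_n$.
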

Before proving the theorem, we define the $k$th slice of any function $f$ as
\begin{equation}
\forall x \inR : \quad \mathrm{S}_k\{f\}(x) = \begin{cases}f(x+k),&x\in[0,1)\\0,& \text{otherwise,}\end{cases}
\end{equation}
and the set of nonzero slices of all the generating functions as
\begin{equation}
    \mathcal{T}(\boldsymbol{\phi}) = \{\mathrm{S}_k\{\phi_n\} \textnormal{}: \mathrm{S}_k\{\phi_n\} \not\equiv 0 \textnormal{ and }k\inZ, n=1,...,N\}
.\end{equation}
The proof will also invoke Lemma \ref{LemmaRB}.
\begin{lemma}
\label{LemmaRB}
    Let $\boldsymbol{\phi}\in (\LDR)^{N}$ be compactly supported. If $\mathcal{T}(\boldsymbol{\phi})$ is a set of linearly independent functions, then $\boldsymbol{\phi}$ generates a Riesz basis.
\end{lemma}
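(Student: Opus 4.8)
The plan is to argue directly in the space domain, turning the global $\LDR$ estimate into a countable family of \emph{identical} finite-dimensional estimates, one per unit interval. Fix $\boldsymbol{c}\inlDZN$ and let $f = \sum_{k\inZ}\boldsymbol{c}[k]^T\boldsymbol{\phi}(\cdot-k)$. Writing $x = m+t$ with $m\inZ$, $t\in[0,1)$ and re-indexing the inner sum by $j=m-k$, I get
\begin{equation}
f(m+t) = \sum_{n=1}^N\sum_{j\inZ}c_n[m-j]\,\mathrm{S}_j\{\phi_n\}(t), \quad t\in[0,1).
\end{equation}
Because $\boldsymbol{\phi}$ is compactly supported, only finitely many slices $\mathrm{S}_j\{\phi_n\}$ are nonzero; collect their index pairs in the finite set $I=\{(n,j):\mathrm{S}_j\{\phi_n\}\not\equiv0\}$, so that the sum above is finite and its summands range exactly over $\mathcal{T}(\boldsymbol{\phi})$.

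Next I would evaluate the norm slice by slice, $\norm{f}_{\LDR}^2=\sum_{m\inZ}\int_0^1|f(m+t)|^2\dint t$, observing that each integral is a Hermitian quadratic form in the finite vector $(c_n[m-j])_{(n,j)\in I}$ whose matrix is the Gram matrix $\boldsymbol{G}$ of the slices, with entries $\dotprod{\mathrm{S}_j\{\phi_n\}}{\mathrm{S}_{j'}\{\phi_{n'}\}}$. The essential point is that $\boldsymbol{G}$ is the \emph{same} for every $m$. Linear independence of $\mathcal{T}(\boldsymbol{\phi})$ is precisely the positive-definiteness of $\boldsymbol{G}$, so its extreme eigenvalues obey $0<a\le b<+\infty$ and, uniformly in $m$,
\begin{equation}
a\!\!\sum_{(n,j)\in I}\!\!|c_n[m-j]|^2 \;\le\; \int_0^1|f(m+t)|^2\dint t \;\le\; b\!\!\sum_{(n,j)\in I}\!\!|c_n[m-j]|^2.
\end{equation}

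Summing over $m\inZ$ then finishes the argument: for each fixed $(n,j)\in I$ the shift $m\mapsto m-j$ gives $\sum_{m\inZ}|c_n[m-j]|^2=\norm{c_n}_{\ell_2}^2$, so the two flanking sums both collapse to $\sum_{n=1}^N d_n\norm{c_n}_{\ell_2}^2$, where $d_n=\#\{j:(n,j)\in I\}$ counts the nonzero slices of $\phi_n$. Since there are finitely many slices and (each generator being nonzero) at least one per $\phi_n$, we have $1\le d_n\le D:=\max_n d_n$, hence $\norm{\boldsymbol{c}}_{\ell_2}^2\le\sum_n d_n\norm{c_n}_{\ell_2}^2\le D\norm{\boldsymbol{c}}_{\ell_2}^2$. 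Combining this with the slicewise bounds yields $a\norm{\boldsymbol{c}}_{\ell_2}^2\le\norm{f}_{\LDR}^2\le bD\norm{\boldsymbol{c}}_{\ell_2}^2$, i.e.\ the Riesz inequalities with $A=\sqrt{a}$ and $B=\sqrt{bD}$.

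I expect the delicate part to be the bookkeeping that converts the uniform per-interval estimates into genuine $\ell_2$-bounds on $\boldsymbol{c}$: one must notice that a single sequence $c_n$ feeds several slices, so the multiplicities $d_n$ appear and force the harmless finite factor $D$ in the upper bound. The only place the hypothesis is used in full strength is the strict positivity of the smallest eigenvalue $a$ of $\boldsymbol{G}$, which is exactly where linear independence---of the \emph{finitely many} slices, thanks to compact support---enters; everything else is a finite-dimensional computation replicated identically across the integers.
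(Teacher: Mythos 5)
Your proof is correct, and it takes a genuinely different route from the paper's. The paper argues in the Fourier domain: it factors the Gramian matrix-valued function as $(\hat{\boldsymbol{G}}(\omega))_{mn}=\dotprod{\tilde{\phi}_m(\omega,\cdot)}{\tilde{\phi}_n(\omega,\cdot)}$ with $\tilde{\phi}_n(\omega,\cdot)=\sum_{k}\mathrm{S}_k\{\phi_n\}\ee^{-\jj\omega k}$, deduces $\det\hat{\boldsymbol{G}}(\omega)>0$ for every $\omega$ from the linear independence of the slices, and then uses continuity and $2\pi$-periodicity of $\omega\mapsto\det\hat{\boldsymbol{G}}(\omega)$ to reach the uniform eigenvalue condition (\ref{eigenvalueRB}). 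You stay entirely in the time domain: you split $\norm{f}_{\LDR}^2$ over unit intervals and recognize each piece as one and the same positive-definite quadratic form---the Gram matrix of the finitely many nonzero slices---applied to a sliding window of the coefficients, then sum over $m$ keeping track of the multiplicities $d_n$. Both proofs hinge on the same object (the Gram matrix of $\mathcal{T}(\boldsymbol{\phi})$ being positive definite), but yours is more elementary: no Fourier transform, no compactness-of-$[0,2\pi]$ argument, and no passage from a positive determinant to a positive smallest eigenvalue (a step the paper leaves slightly implicit, since it also needs the upper bound on $\lambda_{\max}$), at the price of explicit but non-tight constants $A=\sqrt{a}$, $B=\sqrt{bD}$, whereas the Fourier route identifies $A^2,B^2$ exactly as the extreme eigenvalues of $\hat{\boldsymbol{G}}(\omega)$ and reuses machinery needed elsewhere in the paper. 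One caveat, common to both arguments and which you rightly flag in passing: the lower bound requires every $\phi_n\not\equiv 0$ (so that $d_n\geq 1$ in your count, respectively $\tilde{\phi}_n(\omega,\cdot)\not\equiv 0$ in the paper's); this is the implicit nondegeneracy under which the lemma is stated.
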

\begin{proof}
The generating functions can be expressed in terms of their slices as $\phi_n(x) = \sum_{k\inZ}\mathrm{S}_k\{\phi_n\}(x - k)$. The Riesz-basis property is best characterized in the Fourier domain with the Gramian matrix (note that, $\boldsymbol{\phi}$ being compactly supported, all the sums are in fact finite), which leads to
 \begin{align}
     (\hat{\boldsymbol{G}}(\omega))_{mn} &= \sum_{q\inZ}\dotprod{\phi_m}{\phi_n(\cdot -q)}\ee^{-\jj\omega q}  \nonumber \\
     &=\sum_{q\inZ}\sum_{k_1\inZ}\sum_{k_2 \inZ}\dotprod{\mathrm{S}_{k_1}\{\phi_m\}}{\mathrm{S}_{k_2}\{\phi_n\}(\cdot - q - (k_2-k_1))}\ee^{-\jj\omega q}  \nonumber\\
     &=\sum_{k_1\inZ}\sum_{k_2 \inZ}\dotprod{\mathrm{S}_{k_1}\{\phi_m\}}{\mathrm{S}_{k_2}\{\phi_n\}}\ee^{\jj\omega(k_2-k_1)}&&\text{if $q\neq (k_1-k_2)$, the inner product vanishes}  \nonumber \\
     &=\dotprod{\sum_{k_1 \inZ}\mathrm{S}_{k_1}\{\phi_m\}\ee^{-\jj\omega k_1}}{\sum_{k_2 \inZ}\mathrm{S}_{k_2}\{\phi_n\}\ee^{-\jj\omega k_2}}  \nonumber \\
     &=\dotprod{\tilde{\phi}_m(\omega,\cdot)}{\tilde{\phi}_n(\omega,\cdot)},
 \end{align}
 where $\Tilde{\phi_n}(\omega,\cdot)$ is the finite weighted sum of slices
\begin{equation}
     \Tilde{\phi_n}(\omega,x) = \sum_{k \inZ}\mathrm{S}_{k}\{\phi_n\}(x)e^{-j\omega k}
 .
 \end{equation}
If, now, $\mathcal{T}(\boldsymbol{\phi})$ is a set of linearly independent functions, then, for any $\omega \inR$, the functions $(\Tilde{\phi_n}(\omega,\cdot))_{n=1,...,N}$ are linearly independent because the sums are finite. This means that $\hat{\boldsymbol{G}}(\omega)$ is the Gramian matrix of a linearly independent family of functions, which is known to be equivalent to $\det \hat{\boldsymbol{G}}(\omega) > 0$.
In addition $g:\omega \mapsto \det(\hat{\boldsymbol{G}}(\omega))$ is a finite  weighted sum of $\ee^{\jj\omega k}$ since $\boldsymbol{\phi}$ is compactly supported. It is therefore continuous and $2\pi$-periodic. The image of $[0,2\pi]$ under $g$ is therefore a closed interval such that
\begin{equation}
0<\essinf_{\omega\in [0,2\pi]}
\det(\hat{\boldsymbol{G}}(\omega))=\min_{\omega\in [0,2\pi]}
\det(\hat{\boldsymbol{G}}(\omega))<\esssup_{\omega\in [0,2\pi]}
\det(\hat{\boldsymbol{G}}(\omega))=\max_{\omega\in [0,2\pi]}
\det(\hat{\boldsymbol{G}}(\omega))<+\infty.
\end{equation}
 Noting that $\det(\hat{\boldsymbol{G}}(\omega))$ is the product of the eigenvalues of $\hat{\boldsymbol{G}}(\omega)$, Condition (\ref{eigenvalueRB}) is satisfied, which means that $\boldsymbol{\phi}$ is a Riesz basis.
\end{proof}
Note that the converse of Lemma \ref{LemmaRB} is not necessarily true. For a counterexample, consider the function in (\ref{eq:counterexample}) made of two side-by-side rectangles of different height, so that
\begin{equation}
\label{eq:counterexample}
\forall x \inR:\phi(x) = \begin{cases}1,& x\in[0,1)\\ \alpha,& x \in [1,2)\\0, & \text{otherwise.}\end{cases}
\end{equation}
In this case, with a single generator, the Gramian matrix is just a scalar and reads $\hat{g}(\omega) = (1+\alpha^2)+2\alpha \cos \omega,$ which verifies, for any $\omega\inR$, that
\begin{equation}
(1-\abs{\alpha})^2\leq\abs{\hat{g}(\omega)}\leq (1+\abs{\alpha})^2.
\end{equation}
So for $\abs{\alpha} \neq 1$, $\phi$ is a Riesz basis with bound $A=(1-\abs{\alpha})$ and $B=(1+\abs{\alpha})$. Yet, $\mathcal{T}(\boldsymbol{\phi})$ is clearly not a set of linearly independent functions since the second slice is a scaled version of the first one.
 For a more practical counterexample, see \cite[Proposition 2.2.]{Antonelli2014}. \\
 \begin{lemma}
 \label{lm:sliceslinearlyindep}
 Let $\V \phi \in (\LDR)^{N}$. If $\V \phi$ is a shortest-support basis, then $\mathcal{T}(\boldsymbol{\phi})$ is a set of linearly independent functions.
 \end{lemma}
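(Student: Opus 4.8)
The plan is to exploit the equality case of Theorem~\ref{MinimalSupport}. Since $\V\phi$ is a shortest-support basis, $\suppsize{\V\phi}=M+1$ and the characterization (\ref{minimalsupportcharacterization}) forces the counting function $\lambda(x)=\sum_{k\inZ}\sum_{n=1}^N\mathbbm{1}_{\supp{\phi_n}}(x+k)$ to equal $M+1$ for almost every $x$. I would first translate this into the language of slices: for $x\in[0,1)$ one has $\mathbbm{1}_{\supp{\phi_n}}(x+k)=1$ exactly when $\mathrm{S}_k\{\phi_n\}(x)\neq 0$, so the equality says that for almost every $x\in[0,1)$ precisely $M+1$ of the fundamental slices $\mathrm{S}_k\{\phi_n\}$ are nonzero at $x$.

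Because $\V\phi$ is compactly supported there are only finitely many nonzero slices, and their essential supports inside $[0,1)$ generate, up to a null set, a finite partition $[0,1)=\bigsqcup_i B_i$ on each piece of which the index set $\Lambda_i=\{(n,k):\mathrm{S}_k\{\phi_n\}|_{B_i}\not\equiv 0\}$ of active slices is constant; discarding null pieces, every surviving $B_i$ has positive measure and, by the previous paragraph, $\#\Lambda_i=M+1$. The heart of the argument is a local independence statement on each such $B_i$. Restricting the reproduction identity (\ref{eq:polyrepro}) to $x\in B_i\subseteq[0,1)$, writing $\phi_n(x-k)=\mathrm{S}_{-k}\{\phi_n\}(x)$ and relabelling the shift index expresses every monomial $x^m$, $m=0,\ldots,M$, as a linear combination of the $M+1$ active slices $(\mathrm{S}_k\{\phi_n\})_{(n,k)\in\Lambda_i}$ restricted to $B_i$. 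Since $B_i$ has positive measure it is infinite, so $1,x,\ldots,x^M$ are linearly independent on $B_i$ and span an $(M+1)$-dimensional space sitting inside the span of the $M+1$ active slices; a family of $M+1$ functions whose span contains an $(M+1)$-dimensional subspace must itself be linearly independent, so the active slices form a basis of that span on $B_i$.

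With local independence in hand, the conclusion follows by assembling the pieces. Suppose $\sum_{(n,k)}a_{n,k}\,\mathrm{S}_k\{\phi_n\}\equiv 0$ on $[0,1)$, the sum running over all nonzero slices. Restricting to a positive-measure $B_i$ annihilates every slice outside $\Lambda_i$ and leaves $\sum_{(n,k)\in\Lambda_i}a_{n,k}\,\mathrm{S}_k\{\phi_n\}|_{B_i}=0$; by the local independence just established, $a_{n,k}=0$ for every $(n,k)\in\Lambda_i$. Each nonzero slice is nonzero on a positive-measure subset of $[0,1)$, which meets at least one $B_i$ in positive measure and hence belongs to the corresponding $\Lambda_i$, so all coefficients vanish and $\mathcal{T}(\V\phi)$ is linearly independent.

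I expect the main obstacle to be the measure-theoretic bookkeeping rather than any single deep step: one must check that the finitely many slice-supports genuinely induce a finite partition into positive-measure pieces on which the active set is exactly constant, that each such piece is infinite so that the polynomial restrictions remain $(M+1)$-dimensional, and that every nonzero slice is captured by some positive-measure piece. The conceptual crux, and what makes the shortest-support hypothesis indispensable, is the observation that the equality case pins the number of locally active slices to exactly $M+1$, matching the dimension of the reproduced polynomials and thereby upgrading ``spanning'' to ``linearly independent''.
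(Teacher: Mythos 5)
Your argument is correct, but it takes a genuinely different route from the paper. The paper proves the contrapositive in two lines: if some nonzero slice $\mathrm{S}_{k_0}\{\phi_{q_0}\}$ depends linearly on the others, one discards it and obtains a new compactly supported generating system (the remaining slices) that still reproduces polynomials of degree up to $M$ while having total support strictly smaller than $\suppsize{\boldsymbol{\phi}}$; the lower bound of Theorem~\ref{MinimalSupport} applied to this reduced system then forces $\suppsize{\boldsymbol{\phi}}>M+1$, so $\boldsymbol{\phi}$ was not of shortest support. You instead give a direct proof from the equality characterization (\ref{minimalsupportcharacterization}): you localize to the atoms of the partition of $[0,1)$ induced by the slice supports, observe that on each positive-measure atom exactly $M+1$ slices are active while their span contains the $(M+1)$-dimensional space of restricted polynomials, conclude local independence by a dimension count, and then globalize. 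This is essentially a re-run, in the equality case, of the localization machinery already inside the proof of Theorem~\ref{MinimalSupport}; it is longer than the paper's reduction but it buys a sharper local statement (on every atom the active slices form a basis of an $(M+1)$-dimensional space containing the polynomials), whereas the paper's argument only uses the inequality of Theorem~\ref{MinimalSupport} and never needs the equality case at all.

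One point to tighten: the identification of ``$x+k\in\supp{\phi_n}$'' with ``$\mathrm{S}_k\{\phi_n\}(x)\neq 0$'' is not exact, since $\supp{\phi_n}$ is a closure and may exceed the set where $\phi_n$ is nonzero by a set of positive measure. This is harmless for your argument because you only need the one-sided implication that holds (nonzero at $x$ implies membership in the support), which gives $\#\Lambda_i\leq M+1$ on almost every atom; the reverse inequality $\#\Lambda_i\geq M+1$ should be credited to the reproduction property and the dimension count rather than to (\ref{minimalsupportcharacterization}). With that bookkeeping made explicit, the proof is complete.
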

 \begin{proof}
It is equivalent to prove the contrapositive of the lemma, which states that  if $\mathcal{T}(\boldsymbol{\phi})$ is not a set of linearly independent functions, then $\V \phi$ is not a shortest-support basis. To that end, suppose that $\mathcal{T}(\boldsymbol{\phi})$ is not a set of linearly independent functions. This means that one can find a slice, say $\mathrm{S}_{k_0}\{\phi_{q_0}\}$, that depends linearly on the others. Now, consider the integer-shift-invariant space generated by the set of functions $\mathcal{T}(\boldsymbol{\phi}) \backslash \{\mathrm{S}_{k_0}\{\phi_{q_0}\}\}$. Note that the new generating functions differ now both in size (support size of at most 1) and in number (possibly greater than $N$). On one hand, the new integer-shift-invariant space  is larger than the initial space and, in particular, is still able to reproduce polynomials of degree up to $M$. On the other hand, the sum of the support size of the generating functions is smaller than $\suppsize{\boldsymbol{\phi}}$ because a nonzero slice was removed. So, $\boldsymbol{\phi}$ cannot be of minimal support.
 \end{proof}
We can now prove Theorem \ref{MinimalSupport_RB}.
\begin{proof}[Proof of Theorem \ref{MinimalSupport_RB}]
Let $\V \phi \in (\LDR)^{N}$ be compactly supported. By contraposition, if it is not a Riesz basis, then $\mathcal{T}(\boldsymbol{\phi})$ is not a set of linearly independent functions (Lemma \ref{LemmaRB}). Then, by Lemma \ref{lm:sliceslinearlyindep}, $\boldsymbol{\phi}$ cannot be of minimal support.
\end{proof}
To conclude this section, we present two results for finitely generated integer-shift-invariant spaces in preparation to a characterization of multi-spline spaces (Theorem \ref{prop:MN}). The unit sample sequence is written $\delta[\cdot]$ and is defined by $\delta [k] =  \begin{cases}1, & k=0\\0, & k\neq 0\end{cases}$, and its matrix version $\V \delta_{N\times N}$ is defined by $(\V \delta_{N\times N})_{pq}[\cdot] = \begin{cases}\delta[\cdot], & p=q\\0, & p\neq q\end{cases}$.
\begin{lemma}
\label{lm:convo}
Let $N,M\inN$, $\V C \in (\mathbb{R}^{\mathbb{Z}})^{N\times M}$, $\V B \in (\mathbb{R}^{\mathbb{Z}})^{M\times N}$. If the sequence of matrices $\V B$ is compactly supported and $\V C*\V B = \V \delta_{N\times N}$, then $M\geq N$.
\end{lemma}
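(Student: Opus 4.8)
The plan is to convert the convolution identity $\V C * \V B = \V \delta_{N\times N}$ into a rank statement over a field, using crucially that $\V B$ is compactly supported. To this end I would pass to the $z$-transform of $\V B$, namely $\hat{\V B}(z) = \sum_{k\inZ}\V B[k]\, z^{-k}$, whose entries are honest Laurent polynomials; I can therefore view $\hat{\V B}(z)$ as an $M\times N$ matrix over the field $\mathbb{R}(z)$ of rational functions. The argument is by contradiction: assuming $M<N$, I would manufacture a nonzero object that is forced to vanish.

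The steps are as follows. Since $\hat{\V B}(z)$ has $N$ columns sitting in the $M$-dimensional space $\mathbb{R}(z)^{M}$ and $N>M$, these columns are linearly dependent over $\mathbb{R}(z)$, so the kernel of $\V r\mapsto \hat{\V B}(z)\,\V r$ is nontrivial. Picking a nonzero $\V r(z)\in\mathbb{R}(z)^{N}$ in this kernel and clearing denominators, I obtain a nonzero vector $\hat{\V p}(z)\in\mathbb{R}[z,z^{-1}]^{N}$ of Laurent polynomials with $\hat{\V B}(z)\,\hat{\V p}(z)=\V 0$. This $\hat{\V p}$ is exactly the $z$-transform of a nonzero, compactly supported sequence vector $\V p\in(\mathbb{R}^{\mathbb{Z}})^{N}$, and since the $z$-transform is injective and turns convolution of finitely supported sequences into products of Laurent polynomials, the relation is equivalent to $\V B * \V p=\V 0$. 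Finally I would convolve the hypothesis on the right by $\V p$: on one hand $(\V C*\V B)*\V p=\V \delta_{N\times N}*\V p=\V p$, while on the other hand $\V C*(\V B*\V p)=\V C*\V 0=\V 0$. Equating the two yields $\V p=\V 0$, contradicting $\V p\neq\V 0$, and hence $M\geq N$.

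The delicate point, which I expect to be the main obstacle, is that $\V C$ is not assumed compactly supported: its $z$-transform need not exist, so I cannot treat $\V C$ as a matrix over $\mathbb{R}(z)$, and the rank argument applies to $\V B$ alone. The device that sidesteps this is to never transform $\V C$ and to keep it in the time domain, letting the compactly supported kernel vector $\V p$ do all the work. I would then justify the associativity step $(\V C*\V B)*\V p=\V C*(\V B*\V p)$ carefully: because both $\V B$ and $\V p$ are finitely supported, each output coefficient of the triple convolution is a finite sum, so reordering the summations is unconditionally valid even though $\V C$ is arbitrary. This is precisely what makes the cancellation $\V C*(\V B*\V p)=\V 0$ legitimate and completes the contradiction.
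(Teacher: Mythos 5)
Your proof is correct, but it takes a genuinely different route from the paper's. The paper never extracts a kernel vector: it truncates $\V C$ to a window $\V C_m=\mathbbm{1}_{\{-s,\ldots,ms\}}\times\V C$ so that its $z$-transform exists, writes $\hat{\V C}_m(z)\hat{\V B}(z)=z^{-2s}\V A(z)$ with $\V A(z)=z^{2s}\mathbf{I}_{N\times N}+\V P(z)+z^{(m+1)s+1}\V Q(z)$, and then shows by a degree analysis of the multilinear expansion that $\det\V A(z)$ has a nonzero coefficient in front of $z^{2sN}$, hence is not identically zero; evaluating at a point $z_0$ where $\V A(z_0)$ has full rank gives $N\leq\min(M,N)\leq M$ directly, without contradiction. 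You instead leave $\V C$ untouched in the time domain, observe that for $M<N$ the $N$ columns of $\hat{\V B}(z)$ are dependent over the field $\mathbb{R}(z)$, clear denominators to get a nonzero compactly supported $\V p$ with $\V B*\V p=\V 0$, and reach a contradiction via the associativity $(\V C*\V B)*\V p=\V C*(\V B*\V p)$, which you correctly justify by noting that both $\V B$ and $\V p$ have finite support so every coefficient of the triple convolution is a finite sum. Your argument is shorter and avoids the paper's bookkeeping with the truncation parameters $s$ and $m>2N+1$ and the degree bounds on $\V P$ and $\V Q$; what the paper's version buys is a constructive flavor (it exhibits an explicit $z_0$ at which the rank is full) and it sidesteps any appeal to linear algebra over the function field $\mathbb{R}(z)$, staying entirely with polynomial determinants. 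Both proofs hinge on the same essential point — compact support of $\V B$ is what makes all relevant sums finite while $\V C$ remains arbitrary — and your handling of that point is sound.
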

\begin{proof}
There exists $s\inN$ such that $\supp{\V B}\subset \{-s,...,s\}\subset \mathbb{N}$. The behavior of $\V C[k]$ when $\abs{k}\rightarrow \infty$ is not known, and it is easier to work with the truncated version $\V C_{m}= \mathbbm{1}_{\{-s,...,ms\}} \times \V C $, where $m\inN$ is a large enough integer $m>2N+1$. The sequence of matrices $\V C_{m}*\V B$ is compactly supported and satisfies $\supp{\V C_{m}*\V B}\subset \{-2s,...,(m+1)s\}$. Following the properties of convolution of compact sequences, we have, for any $k=0,...,(m-1)s$, that $\V C_{m}*\V B[k]=\V C*\V B[k] = \V \delta_{N\times N}[k]$. Therefore, one can write that
\begin{equation}
\V C_{m}*\V B= \V \delta_{N\times N}+\sum_{\substack{-2s\leq k < 0\\(m-1)s+1 \leq k \leq (m+1)s}}\mathbf{M}_{k}\delta[\cdot-k],
\end{equation}
where $\mathbf{M}_{k}\in \mathbb{R}^{N\times N}$ are matrices that account for the fact that $\V C_{m}$ is a truncated version of $\V C$.
This then translates into the following z-transform matrix relation (note that all sequences are compactly supported so the z-transforms are well defined)
\begin{equation}
\hat{\V C}_{m}(z)\hat{\V B}(z) = \mathbf{I}_{N\times N}+\sum_{\tiny \substack{-2s\leq k < 0\\(m-1)s < k \leq (m+1)s}}z^{-k} \mathbf{M}_{k} =\mathbf{M}_{N\times N}+\sum_{k=-2s}^{-1}z^{-k} \mathbf{M}_{k}+\sum_{k=(m-1)s+1}^{(m+1)s}z^{-k} \mathbf{M}_{k}= z^{-2s} \V A(z)
,
\end{equation}
where $\V A(z)$ can be decomposed as
\begin{equation}
\V A(z) = z^{2s} \mathbf{I}_{N\times N}+\V P(z) +z^{(m+1)s+1}\V Q(z),
\end{equation}
 where $\V P(z)$ and $\V Q(z)$ are polynomial matrices of degree $(2s-1)$. The determinant of $\V A(z)$ can be expressed in terms of the columns of $\mathbf{I}_{N\times N}, \V P(z)$, and $\V Q(z)$ (denoted respectively $\mathbf{e}_k,\V p_k(z)$, and $\V q_k(z)$), so that
\begin{equation}
z\mapsto \det A(z) = \det(z^{2s} \mathbf{e}_1+\V p_1(z)+z^{(m+1)s} \V q_1(z),\ldots,z^{2s} \mathbf{e}_N+\V p_N(z)+z^{(m+1)s} \V q_N(z))
.
\end{equation}
Knowing that the determinant is $n$-linear with respect to the columns, $z\mapsto \det \V A(z)$ is a polynomial function of degree at most $(m+3)sN$. We now want to prove that it cannot be identically zero. To that end, we expand the determinant with respect to the columns and find that there is a unique term of the form $\lambda z^{2sN}$. It is obtained by picking for $k=1,\ldots,N$ the column $\mathbf{e}_k z^{2s}$. The coefficient in front of $z^{2sN}$ is therefore $\det(\mathbf{e}_1,\ldots, \mathbf{e}_N) = 1 \neq 0$. Indeed, for other combinations of columns in the expansion, we would have that
\begin{itemize}
\item if at least one column of the form $z^{(m+1)s}\V q_k(z)$ is chosen, then it results in a term of degree at least $(m+1)s>(2N+2)s>2sN$;
\item else, at least one column of the form $\V p_k(z)$ is chosen. Since the degree of $\V p_k(z)$ is lower than $2s$, the resulting term in the expansion has a degree lower than $2sN$.
\end{itemize}
In the end, we proved that $z\mapsto \det \V A(z)$ cannot be identically zero. Therefore, there exists $z_{0}\inR$ so that $\mathrm{rank}(\V A (z_{0}))=N$. It implies that $N = \mathrm{rank}(\hat{\V C}_{m}(z_{0})\hat{\V B}(z_{0}))\leq \min(\mathrm{rank}(\hat{\V C_{m}}(z_{0})),\mathrm{rank}(\hat{\V B}(z_{0})))\leq \min(M,N)\leq M$.
\end{proof}
\begin{lemma}
\label{lm:MinimalGeneratingFunctions}
Let $\V \psi \in (\LDR)^M$ and $\V \eta \inLDRN$ be two collections of compactly supported functions that are able to reproduce each other (the reproducing sequences might not be in $\lDZ$). If $\V\eta$ is a shortest-support basis, then $M\geq N$.
\end{lemma}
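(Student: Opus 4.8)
The plan is to reduce the statement to a matrix-convolution identity that fits the hypotheses of Lemma \ref{lm:convo}. First I would encode the mutual-reproduction assumption in sequence-matrix form: there exist $\V C \in (\mathbb{R}^{\mathbb{Z}})^{M\times N}$ and $\V B \in (\mathbb{R}^{\mathbb{Z}})^{N\times M}$ (whose entries need not lie in $\lDZ$) such that $\V \psi = \V C * \V \eta$ and $\V \eta = \V B * \V \psi$. Substituting the first relation into the second yields $\V \eta = (\V B * \V C) * \V \eta$, so that $\V B * \V C$ acts as the identity on $\V \eta$. If I can show that this forces $\V B * \V C = \V \delta_{N \times N}$ with $\V C$ compactly supported, then Lemma \ref{lm:convo}, applied with $\V B$ (of size $N \times M$) in the role of its first factor and the compactly supported $\V C$ (of size $M \times N$) in the role of its compact second factor, immediately gives $M \geq N$.

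Both missing facts follow from the hypothesis that $\V \eta$ is a shortest-support basis, which by Lemma \ref{lm:sliceslinearlyindep} makes $\mathcal{T}(\V \eta)$ a linearly independent set. The central computation is the slice expansion of a reproduction identity: for $x \in [0,1)$, using $\eta_j(x + l - k) = \mathrm{S}_{l-k}\{\eta_j\}(x)$, the relation $\psi_i = \sum_{j,k} C_{ij}[k]\, \eta_j(\cdot - k)$ becomes $\mathrm{S}_l\{\psi_i\} = \sum_{j} \sum_{p} C_{ij}[l-p]\, \mathrm{S}_p\{\eta_j\}$, a finite sum over the nonzero slices of $\V \eta$. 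To prove compact support of $\V C$, I would exploit that $\psi_i$ is compactly supported, so $\mathrm{S}_l\{\psi_i\} \equiv 0$ once $\lvert l\rvert$ is large; linear independence of the nonzero slices then forces $C_{ij}[l-p] = 0$ for every nonzero slice $\mathrm{S}_p\{\eta_j\}$, and letting $l \to \pm\infty$ shows that each $C_{ij}$ vanishes outside a finite window. Hence $\V C$ is compactly supported.

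The same slice machinery upgrades the identity on $\V \eta$ into an exact algebraic identity. Setting $\V E = \V B * \V C$ (well defined entrywise, since $\V C$ is now compact), the relation $(\V E - \V \delta_{N \times N}) * \V \eta = 0$ exhibits a sequence combination of the shifts $\{\eta_j(\cdot - k)\}$ that vanishes identically; taking slices and invoking the linear independence of $\mathcal{T}(\V \eta)$ once more forces every coefficient to vanish, that is $\V E = \V \delta_{N \times N}$. With $\V B * \V C = \V \delta_{N \times N}$ and $\V C$ compactly supported both established, the conclusion $M \geq N$ follows from Lemma \ref{lm:convo}.

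The main obstacle is the first of these two steps: proving that the reproducing sequence $\V C$ is compactly supported, even though the hypothesis only guarantees it to be an arbitrary (possibly non-summable) sequence. This is precisely where the shortest-support structure of $\V \eta$, through the linear independence of its slices, becomes indispensable; without it one could not rule out reproducing sequences of unbounded support, and Lemma \ref{lm:convo} would not apply. Everything else is bookkeeping with finite sums, made legitimate by the compact support of the generators.
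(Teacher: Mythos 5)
Your proposal is correct and follows essentially the same route as the paper: encode mutual reproduction as matrix convolutions, use the linear independence of the slices of $\V\eta$ (Lemma \ref{lm:sliceslinearlyindep}) to show that the sequence expressing the compactly supported $\V\psi$ in terms of $\V\eta$ is itself compactly supported and that $\V\eta$ reproduces itself uniquely, and then invoke Lemma \ref{lm:convo}. Your version merely swaps the names of $\V B$ and $\V C$ relative to the paper and spells out the slice-expansion details that the paper states more tersely.
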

\begin{proof}
By hypothesis, there exist vector sequences $\V c_p \in (\mathbb{R}^{\mathbb{Z})^M}$ such that $\eta_p = \sum_{k\inZ}\V c_p[k]^T\boldsymbol{\psi}(\cdot-k) = \V {c}_p^T*\boldsymbol{\psi}$, which reads in matrix form
\begin{equation}
\V\eta = \V C*\V \psi, \quad \V C \in (\mathbb{R}^{\mathbb{Z}})^{N\times M}
.
\end{equation}
Similarly, one can write that
\begin{equation}
\V\psi= \V B*\V \eta, \quad \V B \in (\mathbb{R}^{\mathbb{Z}})^{M\times N}
.
\end{equation}
From Lemma \ref{lm:sliceslinearlyindep}, we know that the nonzero slices of $\V \eta$ are linearly independent (shortest-support basis). This implies that, to generate the compactly supported function $\V \psi$, the sequence of matrices $\V B$ must be compactly supported as well since the only way to generate the zero function on a segment for $\V \eta$ is to set the active coefficient of $\V B$ to $0$. Now, one can mix the equations and find that
\begin{equation}
\V\eta = \V C*(\V B*\V \eta) = (\V C*\V B)*\V \eta
.
\end{equation}
The associativity of the convolution operations is justified by the fact that both $\V \eta$ and $\V B$ are compactly supported, meaning that, for a given argument $x$, all sums are finite.
Because the slices of $\V \eta$ are linearly independent, $\V \eta$ can reproduce itself in a unique way, which gives
\begin{equation}
\label{eq:convolemma}
\V C*\V B= \V \delta_{N\times N},
\end{equation}
We can now conclude that $M\geq N$ with Lemma \ref{lm:convo}.
\end{proof}

\section{Multi-Spline Shortest Bases}
\label{sec:Multispline}
With a single generator, the unique shortest basis of degree $n\inN$ (up to a scaling and  a shift operation) is the B-spline of degree $n$, which is a generator of $S_n$. For multiple generators, it is natural to consider spaces generated by a finite number of B-splines $\boldsymbol{\beta}_{\mathbf{n}} = \left(\beta_+^{n_1},..., \beta_+^{n_N}\right)$, where $\mathbf{n} = (n_1,\ldots,n_N)$ and $n_1<\ldots<n_N$. In this way, the reproducing and approximation properties are inherited from the higher-degree spline $\beta_+^{n_N}$. Yet, multi-spline spaces are not generated optimally by the classical B-splines.
\begin{proposition}
Let $N\inN\setminus{\{0\}}$ and $\mathbf{n} = (n_1,\ldots,n_N)$ with $n_1<\cdots <n_N\inN$. If $N>1$, then $\boldsymbol{\beta}_{\mathbf{n}} = \left(\beta_+^{n_1},..., \beta_+^{n_N}\right)$ is neither a shortest-support basis nor a Riesz basis.
\end{proposition}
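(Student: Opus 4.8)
The statement bundles two independent claims, and I would prove them separately: note at the outset that ``not a Riesz basis'' cannot be deduced from ``not a shortest-support basis'', since Theorem \ref{MinimalSupport_RB} only gives the implication in the other direction, so a genuinely separate argument is needed for each.

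\emph{Not a shortest-support basis.} First I would record the elementary support count. Since $\supp{\beta_+^{n_k}} = [0,n_k+1]$, we have $\suppsize{\boldsymbol{\beta}_{\mathbf{n}}} = \sum_{k=1}^{N}(n_k+1)$. Next I would pin down the exact degree of polynomial reproduction of $\shiftspace{\boldsymbol{\beta}_{\mathbf{n}}}$ as $n_N$: reproduction up to degree $n_N$ holds because $S_{n_N}\subset \shiftspace{\boldsymbol{\beta}_{\mathbf{n}}}$ and $\beta_+^{n_N}$ already reproduces every polynomial of degree at most $n_N$, while no higher degree is reproducible because any locally finite integer-shift combination of the $\beta_+^{n_k}$ is, on each unit interval, a polynomial of degree at most $n_N$, whereas $x^{m}$ has degree $m$. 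Being a shortest-support basis would therefore require $\suppsize{\boldsymbol{\beta}_{\mathbf{n}}} = M+1$ for some $M\leq n_N$; but for $N>1$ the sum contains at least one extra positive term, so $\suppsize{\boldsymbol{\beta}_{\mathbf{n}}} \geq (n_N+1)+1 > n_N+1 \geq M+1$, and the minimal value of Theorem \ref{MinimalSupport} is never attained.

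\emph{Not a Riesz basis.} Here the plan is to use the Fourier characterization (\ref{eigenvalueRB}) and to evaluate the Gramian at $\omega = 0$. Using $\hat{\beta}_+^{n}(2\pi k) = \delta[k]$ for all $k\inZ$ (the B-spline is normalized, $\int \beta_+^{n} = 1$, and $\hat{\beta}_+^{n}$ vanishes at the nonzero multiples of $2\pi$), the $(m,n)$ entry collapses to $(\hat{\boldsymbol{G}}(0))_{mn} = \sum_{k\inZ}\hat{\beta}_+^{n_m}(2\pi k)\,\hat{\beta}_+^{n_n}(2\pi k)^{*} = 1$. Hence $\hat{\boldsymbol{G}}(0)$ is the all-ones $N\times N$ matrix, which has rank one; for $N>1$ it is singular, so $\lambda_{\min}(0) = 0$. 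Since the entries of $\hat{\boldsymbol{G}}$ are finite trigonometric polynomials, $\omega\mapsto \lambda_{\min}(\omega)$ is continuous, whence $\essinf_{\omega}\lambda_{\min}(\omega)\leq \lambda_{\min}(0) = 0$; together with $\lambda_{\min}\geq 0$ this forces $A^2 = 0$ in (\ref{eigenvalueRB}), so $\boldsymbol{\beta}_{\mathbf{n}}$ does not generate a Riesz basis.

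The support count and the degree identification are routine; the only real content is the rank-one degeneracy of $\hat{\boldsymbol{G}}(0)$, which encodes that all the B-splines reproduce the constant with the same (non-$\ell_2$) unit sequence. I expect the one point needing care to be the passage from this pointwise degeneracy to $\essinf_{\omega}\lambda_{\min}(\omega) = 0$, which is handled by the continuity of the eigenvalues exactly as in the proof of Lemma \ref{LemmaRB}.
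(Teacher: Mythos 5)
Your proposal is correct and follows essentially the same route as the paper: the support-count and degree argument for the first claim is identical, and for the Riesz part you exhibit the same degeneracy of $\hat{\boldsymbol{G}}(0)$ (it is the all-ones matrix), only computed in the Fourier domain via $\hat{\beta}_+^{n}(2\pi k)=\delta[k]$ rather than via the paper's time-domain slice/partition-of-unity argument --- these are Poisson-dual statements of the same fact. The final continuity step used to pass from $\lambda_{\min}(0)=0$ to the essential infimum in (\ref{eigenvalueRB}) matches the paper's.
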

\begin{proof}
\begin{itemize}[leftmargin=*]
\item The space $\shiftspace{\boldsymbol{\beta}_{\mathbf{n}}}$ can reproduce polynomials of degree at most $n_N$ due to the inclusion $\shiftspace{\beta_+^{n_N}}\subset \shiftspace{\V \phi}$. Moreover, the sum of the support of $\V \beta_{\mathbf{n}}$ is $\sum_{m=1}^N (n_m+1)>n_N+1$, which shows that the basis is not a shortest-support one.
\item From the proof of Lemma \ref{LemmaRB}, the Gramian matrix can be written
 \begin{align}
     (\hat{\boldsymbol{G}}(\omega))_{pq}=\dotprod{\Tilde{\beta}^{n_p}(\omega,\cdot)}{\Tilde{\beta}^{n_q}(\omega,\cdot)},
 \end{align}
 where $\Tilde{\beta}^{n_p}(\omega,\cdot)$ is the finite weighted sum of slices
 \begin{equation}
     \Tilde{\beta}^{n_p}(\omega,x) = \sum_{k \inZ}\mathrm{S}_{k}\{\beta^{n_p}\}(x)\ee^{-\jj\omega k}
 .\end{equation}
It is known that $\beta_+^{n_p}$ satisfies the partition of unity, meaning that, for any $x\inR, \sum_{k\inZ}\beta_+^{n_p}(x-k) = 1$. In terms of slices, it means that $\Tilde{\beta}^{n_p}(0,x) =\sum_{k \inZ}\mathrm{S}_{k}\{\beta^{n_p}\} (x)= \mathbbm{1}_{[0,1)}(x)$. The functions $(\Tilde{\beta}^{n_p}(0,\cdot))_{p=1,...,N}$ are therefore not linearly independent (because they are equal) and $\det \hat{\boldsymbol{G}}(0) = 0$. As stated in the proof of Lemma \ref{LemmaRB}, $\omega \mapsto \det \hat{\boldsymbol{G}}(\omega)$ is a continuous function (because the B-splines are compactly supported), meaning that
\begin{equation}
\essinf_{\omega \in [0,2\pi]}{\det \hat{\boldsymbol{G}}(\omega)} = \min_{\omega \in [0,2\pi]}{\det \hat{\boldsymbol{G}}}(\omega) = 0.
\end{equation}
Following (\ref{eigenvalueRB}), $\boldsymbol{\beta}_{\mathbf{n}}$ cannot be a Riesz basis.
\end{itemize}
\end{proof}
For $N>1$, only few shortest bases are known, with the most prominent being the Hermite splines presented by Lipow and Schoenberg \cite{Lipow1973}. They are solution of the direct interpolation problem
\begin{equation}
\label{eq:Hermite}
\text{find } \eta_p\in S_{n,N}: \eta_p^{(\nu)}(k) = \begin{cases}1,& \text{if } \nu=p \text{ and } k=0,\\0,&\text{otherwise,}\end{cases}
\end{equation}
with $k\inZ, \quad \nu,p = 0,...,(N-1)$, and $S_{n,N} = S_{n}+\cdots+S_{n+N-1}$.
The function $\eta_p$ has all its derivatives set to zero at the integers, except for the $p$th derivative that is one at zero. The multi-spline space must be chosen so that $\eta_p$ is sufficiently differentiable, yielding the condition $n\geq N$. When $n=N$, shortest-support functions were found (the Hermite splines, see plots \cite{Ranirina2019} for instance) but, unfortunately, in a higher-order approximation space, {\it i.e.} for $n>N$, the functions are not compactly supported anymore. For instance, for derivative sampling (interpolate $f$ and $f^{'}$), the smaller order of approximation solution ($N=2$) is given by the cubic Hermite-spline generators of $S_2+S_3$.
\subsection{Consecutive Multi-Spline Spaces}
The derivatives up to order $(n-1)$ of a compactly-supported spline of degree $n$ must vanish on the edges of the support. This constraint cannot be satisfied if the function is too short. In particular, the shortest nonzero function of $S_n$ has a support size of $(n+1)$ and, interestingly, it is precisely the B-spline of degree $n$. In the special case of a consecutive multi-spline space $S_{n,N}=S_{n}+S_{n+1}+\cdots+S_{n+N-1}$, this result can be directly extended. To that end, we define the space
\begin{equation}
P_{m}^{m'} = \{p \in C^{m'}(\mathbb{R}): \text{$p$ is a polynomial of degree $m$ on each } [k,k+1), k\inZ\}
.
\end{equation}
Note that the space $P_{m}^{m'}$ can be viewed as a spline space with knots of multiplicity $(m-m'-1)$ (\cite[Section 5.11]{Lachance1990}). In our setting with simple knots, $P_{m}^{m'}$ is rather regarded as multi-spline space (Proposition \ref{consecutiveMS}).
\begin{proposition}
\label{consecutiveMS}
    Let $n,N>0$. Then
    $
    S_{n,N}= P^{n-1}_{n+N-1}$.
\end{proposition}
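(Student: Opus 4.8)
The plan is to prove the two inclusions $S_{n,N}\subseteq P^{n-1}_{n+N-1}$ and $P^{n-1}_{n+N-1}\subseteq S_{n,N}$ separately, viewing both as spaces of piecewise-polynomial functions on $\mathbb{R}$ with knots at the integers. The starting observation is that a cardinal spline of degree $m$ is, by definition, a piecewise polynomial of degree $m$ on the integer mesh that is globally $C^{m-1}(\mathbb{R})$; that is, $S_m = P^{m-1}_{m}$. I would use this identification throughout.

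For the forward inclusion I would simply compare degrees and smoothness orders. For each $j=0,\ldots,N-1$ we have $S_{n+j}=P^{n+j-1}_{n+j}$, and since $n+j\leq n+N-1$ (degree) and $n+j-1\geq n-1$ (smoothness), every element of $S_{n+j}$ is piecewise polynomial of degree at most $n+N-1$ and lies in $C^{n-1}(\mathbb{R})$. Hence $S_{n+j}\subseteq P^{n-1}_{n+N-1}$, and because $P^{n-1}_{n+N-1}$ is a vector space, summing over $j$ gives $S_{n,N}=\sum_{j=0}^{N-1}S_{n+j}\subseteq P^{n-1}_{n+N-1}$.

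For the reverse inclusion I would induct on $N$, the case $N=1$ being exactly $S_{n}=P^{n-1}_{n}$. For the inductive step it suffices to establish the one-step decomposition
\[
P^{n-1}_{n+N-1}=P^{n-1}_{n+N-2}+S_{n+N-1},
\]
since then $S_{n,N}=S_{n,N-1}+S_{n+N-1}=P^{n-1}_{n+N-2}+S_{n+N-1}=P^{n-1}_{n+N-1}$ by the induction hypothesis $S_{n,N-1}=P^{n-1}_{n+N-2}$. The inclusion $\supseteq$ is again a degree/smoothness comparison. For $\subseteq$, take $f\in P^{n-1}_{n+N-1}$ and note that $f^{(n+N-1)}$ is piecewise constant on the integer mesh; I would subtract a smooth spline $s\in S_{n+N-1}$ whose top derivative matches, $s^{(n+N-1)}=f^{(n+N-1)}$. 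Then $(f-s)^{(n+N-1)}\equiv 0$, so $f-s$ is piecewise polynomial of degree at most $n+N-2$, and it remains $C^{n-1}(\mathbb{R})$ because $s\in S_{n+N-1}\subseteq C^{n+N-2}(\mathbb{R})\subseteq C^{n-1}(\mathbb{R})$; hence $f-s\in P^{n-1}_{n+N-2}$ and $f=(f-s)+s$ has the desired form.

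The only step requiring a genuine, if routine, argument — and hence the main obstacle — is the existence of the spline $s$ above, equivalently the surjectivity of the top-order derivative map $s\mapsto s^{(n+N-1)}$ from $S_{n+N-1}$ onto the space of integer-mesh piecewise-constant functions. I would prove it by extending $s$ interval by interval: on $[k,k+1)$ one takes the unique degree-$(n+N-1)$ polynomial whose leading coefficient is fixed by the prescribed value of $f^{(n+N-1)}$ there and whose remaining $n+N-1$ coefficients are determined by Taylor matching of $s,s',\ldots,s^{(n+N-2)}$ at the knot $k$ against the values coming from the previous interval. This simultaneously enforces $C^{n+N-2}(\mathbb{R})$ continuity and the prescribed top derivative, yielding $s\in S_{n+N-1}$ with $s^{(n+N-1)}=f^{(n+N-1)}$ and closing the induction.
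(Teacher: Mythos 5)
Your argument is correct, and it takes a genuinely different route from the paper's for the nontrivial inclusion $P^{n-1}_{n+N-1}\subseteq S_{n,N}$. You peel off the \emph{top}-degree component: with $n$ held fixed in the induction on $N$, you subtract a maximally smooth spline $s\in S_{n+N-1}$ whose piecewise-constant leading derivative matches that of $f$, which drops the degree by one and reduces to the induction hypothesis $S_{n,N-1}=P^{n-1}_{n+N-2}$. The paper instead peels off the \emph{bottom}: it differentiates $p$ exactly $n$ times so that $p^{(n)}$ is a piecewise polynomial with jumps, subtracts an $S_0$ spline matching those jumps to restore continuity, invokes the induction hypothesis at the shifted parameter $n=1$ (which is why the paper must quantify its hypothesis $H_N$ over all $n$), and then integrates $n$ times to return to the original space. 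The trade-off is clear: the paper's subtraction step is trivial (matching jumps with a piecewise constant) but it must carry the differentiation/integration bookkeeping, including the fact that the $n$-fold antiderivative is only determined up to a polynomial of degree $n-1$, which is absorbed into $S_n$; your subtraction step requires the small surjectivity lemma for the map $s\mapsto s^{(n+N-1)}$ on $S_{n+N-1}$, which you correctly identify as the only real work and settle by interval-by-interval Taylor matching (just make explicit that you anchor the construction on one interval, say $[0,1)$, and propagate both rightward and leftward, matching at the left or right endpoint accordingly). Both arguments are complete; yours has the minor advantage of keeping $n$ fixed throughout and never leaving the degree-$(n+N-1)$ setting.
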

\begin{proof}The definition of a spline of degree $n$ implies that, for $q=0,...,N-1$, we have that $S_{n+q}\subset P^{n-1}_{n+N-1}$, from which we deduce that $S_{n,N}\subset P^{n-1}_{n+N-1}$.\\
The other inclusion is proven by induction over $N$, with the induction hypothesis
\begin{equation}
H_{N}:\forall n\inN, P^{n-1}_{n+N-1}\subset S_{n,N}.
\end{equation}
\begin{itemize}
\item For $N=1$ and any $n\inN\setminus{\{0\}}$, the result is directly given by the definition of $S_{n,1}=S_{n} = P^{n-1}_{n}$.
\item Suppose that $H_{N}$ holds for $N\inN^*$. Let $p\in P^{n-1}_{n+N}$. We have that $p^{(n-1)}\in P^{0}_{N+1}$ and, consequently, $p^{(n)}$ is a piecewise polynomial function with finite jumps at the knots. There exists $f_{0}\in S_{0}$ that has the same jumps on the knots as $p^{(n)}$. Then, $(p^{(n)}-f_{0})$ is continuous on the integers, which implies that $(p^{(n)}-f_{0})\in P^{0}_{N}$. The induction hypothesis guarantees that $(p^{(n)}-f_{0})\in S_{1,N}$ and, therefore, that $p^{(n)}\in S_{0,N+1}$. After $n$ integrations, we finally have that $p\in S_{n,N+1}$, which concludes the induction step and the proof.
\end{itemize}
\end{proof}
For a given $L\inN$, the space of functions in $P^{m'}_m$ that are supported in $[0,L]$ is a vector space of the known finite dimension \cite{Alfeld1985}
\begin{equation}
\dim(\{p\in P_m^{m'}:\supp{p}\subset [0,L]\}) = ((m-m')L-(m'+1))_+
,
\end{equation}
where $x_+=\max(0,x)$.
Indeed, any $p\in P_m^{m'}$ supported in $[0,L]$ is uniquely defined by $L$ pieces that are polynomials of degree $m$. So, $L \times(m+1)$ coefficients have to be set. The smoothness constraints imply that the pieces cannot be set independently. On the first interval $[0,1)$, the $(m+1)$ coefficients must be chosen so that $p^{(0)},...,p^{(m')}(0) = 0$, which leaves $(m-m')$ degrees of freedom. For the next interval, $(m+1)$ new coefficients have to be set but the values $p^{(0)}(1),...,p^{(m')}(1)$ are already fixed, giving only $(m-m')$ new degrees of freedom. We see that each interval provides $(m-m')$ extra degrees of freedom. In the end, there remain $LN$ degrees of freedom. Now, to enforce that $p\in P_{m}^{m'}$, we must have that $p^{(0)}(L),...,p^{(m')}(L)=0$. The total number of degrees of freedom gives the announced dimension $((m-m')L-(m'+1))_+$.
\begin{corollary}
\label{DimensionCompact}
Let $n,N,L\inN$. The set of functions of $S_{n,N}$ that have their support in $[0,L]$  is a vector space of dimension $(LN-n)_+=\max(0,LN-n)$.
\end{corollary}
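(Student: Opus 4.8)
The plan is to obtain this as an immediate consequence of Proposition~\ref{consecutiveMS} together with the dimension formula for compactly supported functions of $P_m^{m'}$ that was established just above the statement. The substance of the computation has in fact already been carried out in that derivation, so the work here is purely one of matching parameters.

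First I would invoke Proposition~\ref{consecutiveMS} to rewrite the consecutive multi-spline space as $S_{n,N} = P^{n-1}_{n+N-1}$. Under this identification the set $\{f \in S_{n,N} : \supp{f} \subset [0,L]\}$ coincides exactly with $\{p \in P_m^{m'} : \supp{p} \subset [0,L]\}$ for the choice of parameters $m = n+N-1$ and $m' = n-1$. It is a vector space, being the intersection of the linear space $S_{n,N}$ with the support condition $\supp{\cdot}\subset[0,L]$, which is stable under linear combinations.

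Next I would substitute these values into the dimension formula
\[
\dim(\{p\in P_m^{m'}:\supp{p}\subset [0,L]\}) = ((m-m')L-(m'+1))_+ .
\]
Since $m - m' = (n+N-1)-(n-1) = N$ and $m'+1 = n$, the right-hand side collapses to $(NL - n)_+ = (LN-n)_+$, which is precisely the claimed dimension.

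The only points deserving any care are bookkeeping ones, and I do not expect a genuine obstacle. One should note that the hypothesis $N>0$ is what makes Proposition~\ref{consecutiveMS} applicable and guarantees $m=n+N-1\geq m'=n-1$, so that the formula is used in its intended range; and one should observe that the truncation $(\cdot)_+$ in the formula automatically absorbs the degenerate regime $LN\leq n$, where the combined support and smoothness constraints force the only admissible function to be identically zero, giving dimension $0$ in agreement with $(LN-n)_+$.
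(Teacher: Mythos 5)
Your proposal is correct and is exactly the argument the paper intends: the corollary is stated without a separate proof precisely because it follows from Proposition~\ref{consecutiveMS} and the dimension formula for $P_m^{m'}$ by the substitution $m=n+N-1$, $m'=n-1$, giving $((m-m')L-(m'+1))_+=(LN-n)_+$. Your bookkeeping remarks about $N>0$ and the truncation $(\cdot)_+$ are accurate and add nothing that conflicts with the paper.
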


\begin{corollary}
Let the Euclidean division of $n$ by $N$ be written as $n = pN+r$. Then, the shortest-support nonzero functions of $S_{n,N}$ have a support size of $(p+1)$. Moreover, the set $\{ f \in S_{n,N}:\supp{f}\subset [0,p+1] \}$ is a vector space of dimension $(N-r)$.
\end{corollary}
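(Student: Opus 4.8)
The plan is to derive both assertions almost directly from Corollary~\ref{DimensionCompact}, the only genuine work being a reduction from arbitrary compact support to a single contiguous run of integer intervals. Throughout I use that, by Proposition~\ref{consecutiveMS}, $S_{n,N}=P^{n-1}_{n+N-1}$, so every element is a polynomial of degree $n+N-1$ on each interval $[k,k+1)$, $k\inZ$, and is of class $C^{n-1}(\mathbb{R})$.

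First I would establish the dimension count, which is immediate: applying Corollary~\ref{DimensionCompact} with $L=p+1$ shows that $\{f\in S_{n,N}:\supp{f}\subset[0,p+1]\}$ is a vector space of dimension $((p+1)N-n)_+$. Substituting $n=pN+r$ gives $(p+1)N-n=N-r$, and since $0\le r<N$ this is a positive integer, so the dimension equals $N-r$. In particular this space is nonzero, which already exhibits a nonzero element of $S_{n,N}$ whose support size is at most $p+1$.

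It then remains to prove that no nonzero function of $S_{n,N}$ can have support size strictly less than $p+1$. Here I would first observe that the support of a compactly supported $f\in S_{n,N}$ is a finite union of closed integer intervals: on each $[k,k+1)$ the function $f$ is a polynomial, hence either identically zero or nonzero almost everywhere, so $\suppsize{f}$ equals exactly the number of active intervals and is an integer. The key step is a block reduction. Let $[a,a+s]$ be a maximal run of consecutive active intervals, so that $f\equiv 0$ on $[a-1,a)$ and on $[a+s,a+s+1)$. Because $f$ vanishes immediately to the left of $a$ and to the right of $a+s$ and is $C^{n-1}$, all its derivatives up to order $n-1$ vanish at $a$ and at $a+s$; consequently the truncation $g=f\,\mathbbm{1}_{[a,a+s]}$ is still $C^{n-1}$ and piecewise polynomial of degree $n+N-1$, hence $g\in S_{n,N}$. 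This $g$ is nonzero, supported in $[a,a+s]$, with $s\le\suppsize{f}$.

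Finally, shifting $g$ so that it is supported in $[0,s]$ and invoking Corollary~\ref{DimensionCompact} once more, the space $\{h\in S_{n,N}:\supp{h}\subset[0,s]\}$ is nonzero, so $(sN-n)_+\ge 1$, i.e.\ $sN>n=pN+r$. Since $r\ge 0$ this yields $sN>pN$, so $s>p$, that is $s\ge p+1$; hence $\suppsize{f}\ge s\ge p+1$. Combined with the element of support size at most $p+1$ produced above, the minimal support size is exactly $p+1$, which proves the first assertion. I expect the block-reduction step to be the main obstacle, since it is the only place where one must argue carefully---using the $C^{n-1}$ smoothness and the vanishing of the derivatives at the block edges---that a single contiguous piece of $f$ is itself a legitimate element of $S_{n,N}$; everything else is bookkeeping with the dimension formula.
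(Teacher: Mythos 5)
Your proof is correct and rests on the same pillar as the paper's: both arguments reduce everything to Corollary~\ref{DimensionCompact}, using the dimension formula $(LN-n)_+$ to show that $L=p+1$ is the smallest integer length for which a nonzero compactly supported element exists, and then evaluating $((p+1)N-n)=N-r$. Where you genuinely add something is the block-reduction step. The paper's proof tacitly equates the support size of $f$ with the length $L$ of the smallest integer interval containing $\supp{f}$, which is only the same number when the support is connected; it does not address the possibility that $\supp{f}$ is a disconnected union of integer intervals of total measure smaller than $L$. Your observation that each piece of $f$ is a polynomial (hence zero or a.e.\ nonzero on each integer interval), followed by the truncation of $f$ to a maximal run of active intervals --- legitimate because the $C^{n-1}$ regularity forces $f^{(0)},\dots,f^{(n-1)}$ to vanish at both edges of the run, so the truncation remains in $S_{n,N}=P^{n-1}_{n+N-1}$ --- closes exactly this gap and makes the lower bound $\suppsize{f}\geq p+1$ airtight. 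So your argument is the paper's argument plus the missing reduction; nothing in it is superfluous, and the extra care is warranted.
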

\begin{proof}
The set of functions of $S_{n,N}$ that have their support in $[0,L]$ is a vector space of dimension $(LN-n)_+$ (Corollary \ref{DimensionCompact}). To find at least one non-vanishing function in the vector space, its dimension must be greater than one meaning that $(LN-n)\geq 1 \Leftrightarrow L\geq (n+1)/N = p+(r+1)/N$. Knowing that $L\in \mathbb{N}$ and $r<N$, we conclude that one must have that $L = (p+1)$ to find a nonzero compactly supported function. In this case, the dimension reads $((p+1)N-n)=(N+pN-n) = (N-r)$.
\end{proof}
With a single generator, the shortest-support basis is provided by the shortest function. In a consecutive multi-spline space, one would ideally take $(N-r)$ functions of size $(p+1)$ (the shortest) and complete with $r$ functions of size $(p+2)$. This would result in $N$ functions with a total support size of $(N-r)(p+1)+r(p+2) = Np+r+N = n+N=n_N+1$, which is the objective for a shortest-support basis.
For nonconsecutive multi-spline spaces, similar results should exist, but in a more complicated form.
\subsection{Existence and Construction of mB-Splines}
We say that a finite collection $\V \phi$ of multi-spline functions is an mB-spline of degree $\mathbf{n} = (n_1,\ldots,n_N)$ with $n_1<\cdots <n_N\inN$, if it is a shortest-support basis of the space $S_{\mathbf{n}}$. This is the natural extension of B-splines. Similar to the latter, mB-splines can be constructed recursively for any multi-spline space. Indeed, two basic transformations (the “increment step" and the “insertion step") allow one to convert a shortest-support basis of a given space into a shortest-support basis of a different space.
To simplify the explanation, we say that the collection $\boldsymbol{\phi} = (\phi_1,...,\phi_N) \inLDRN$ of compactly supported functions is standardized if, for $n=1,\ldots,N$, we have that
\begin{enumerate}[label=(\roman*)]
\item $\int_\mathbb{R}\phi_n(t)\dint t \in \{0,1\}$,
\item $\inf \{t\inR\colon \phi_n(t)\neq 0\}\in [0,1)$.
\end{enumerate}

The second condition implies that the generating functions are causal, {\it i.e.} $\phi_{n}(t<0) = 0$.
Note that any $\boldsymbol{\phi}$ compactly supported can be standardized without altering $\shiftspace{\boldsymbol{\phi}}$.
\subsubsection{Increment Step}
The B-splines $\beta_+^{n+1}$ can be constructed recursively by noting that
\begin{equation}
\beta_+^{n+1}(x) = \Delta \left \{ \int_{-\infty}^x \beta_+^{n}(t)\dint t\right \}
,
\end{equation}
where $\Delta$ is the finite difference operator $\Delta \{f\}(x) =(f(x) - f(x-1))$. The integration increases the polynomial degree, along with the smoothness at the knots (Step 1), while $\Delta$ ultimately returns a compactly supported function (Step 2). For multiple generating functions, a similar two-step recursive approach is proposed. The general process is mathematically detailed below, while an intuitive example is proposed in Figure \ref{incremementStep}.
\begin{figure}[t!]
\begin{minipage}{1.0\linewidth}
  \centering
  \centerline{\includegraphics[width=\linewidth]{./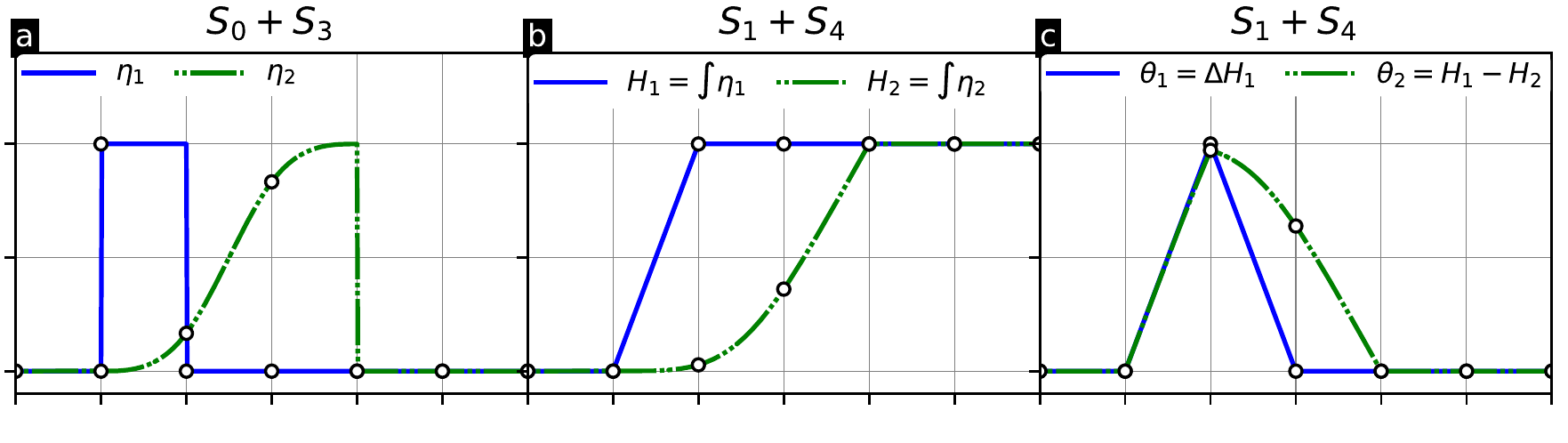}}
  \caption{Increment step that yields a shortest-support basis of $S_1+S_4$ starting from $S_0+S_3$. (a) A shortest-support basis $(\eta_1,\eta_2)$ for $S_0+S_3$ ($\suppsize{\boldsymbol{\eta}}=4$). (b) The integration of  $\eta_1$ and $\eta_2$ results in two generators of $S_1+S_4$, $H_1$ and $H_2$. (c) To get compactly supported functions with the same generating properties, we choose $\theta_1 = \Delta H_1$ and $\theta_2=(H_1-H_2)$. We found a shortest support-basis of $S_1+S_4$ ($\suppsize{\boldsymbol{\theta}}=5$).}
  \label{incremementStep} \medskip
\end{minipage}
\end{figure}

Suppose $\boldsymbol{\eta} = (\eta_1,...,\eta_N)\inLDRN$ is an mB-spline of $S_{n_1}+\cdots+S_{n_N}$. The goal is to find an mB-spline of $S_{n_1+1}+\cdots+S_{n_N+1}$. It will be a generator with a support size of $(n_N+2)$, able to reproduce the B-splines of degree $n_1+1,...,n_N+1$.
\subsubsection*{{\it Integration}}
The collection of functions $\boldsymbol{\eta}$ is able to reproduce the B-splines of degree $n_1,\ldots,n_N$, that is, for any $s \in \{1,...,N\}$ there exists a vector sequence $\boldsymbol{c}^s = (c_1^s,...,c_N^s)$ (not necessarily in $(\lDZ)^N$) so that
\begin{equation}
\label{reprobeta}
\forall x\inR:\beta_+^{n_s}(x) = \sum_{k\inZ}\boldsymbol{c}^s[k]^T\boldsymbol{\eta}(x-k)
.\end{equation}
To justify the calculations to come, we assume that
\begin{equation*}
c_1^s,...,c_N^s \hbox{ are causal sequences, {\it i.e.}, }  c_n^s[k] = 0 \text{ for any } k<0. \qquad (A_{\mathbf{n}}) 
\end{equation*}
The assumption $(A_{\mathbf{n}})$ is not overly restrictive because it will hold for the starting basis of our algorithm and then be preserved by the construction process. In the end, all the bases constructed will be able to reproduce the B-splines with causal sequences.
Let $\boldsymbol{H} = (H_1,...,H_N)$ be defined as
\begin{equation}
\boldsymbol{H}(x) = \int_{-\infty}^x \boldsymbol{\eta}(t) \dint t
.
\end{equation}
The integration of equation (\ref{reprobeta}), followed by the application of the operator $\Delta$, yields
\begin{align}
\beta_+^{n_s+1}(x)&= \Delta \left\{\sum_{k\inZ} \boldsymbol{c}^s[k]^T \boldsymbol{H}(x-k)
\right\}= \sum_{k\inZ} \boldsymbol{c}^s[k]^T \Delta \{\boldsymbol{H}\}(x-k) \nonumber \\&= \sum_{k\inZ} \boldsymbol{c}^s[k]^T (\boldsymbol{H}(x-k)-\boldsymbol{H}(x-1-k)) =\sum_{k\inZ} (\boldsymbol{c}^s[k]^T -\boldsymbol{c}^s[k-1]^T)\boldsymbol{H}(x-k)
.\label{reproducecausal}\end{align}
The assumption that $c_1^s,...,c_N^s$ are causal and the fact that $\boldsymbol{H}$ is also causal (because $\boldsymbol{\eta}$ is compactly supported and standardized) implies that, for any $x\inR$, the sums in (\ref{reproducecausal}) have a finite number of nonzero terms. This enables us to switch the order of the operations (sum, integral, and $\Delta$). Note that the sequence $(\boldsymbol{c}^s[k]^T -\boldsymbol{c}^s[k-1]^T)_{k\inZ}$ is causal.
In short, $\boldsymbol{H}$ can reproduce $(\beta_+^{n_1+1},...,\beta_+^{n_N+1})$ with causal sequences, but it is obviously not a shortest-support basis because its support is infinite.
\subsubsection*{{\it Finite Difference}}
The aim now is to find a basis with the same reproducing properties as $\boldsymbol{H}$, but with minimal support.
To that end, we denote by $s_0$ the index so that $\eta_{s_0}$ is the shortest function in $\boldsymbol{\eta}$ that satisfies $\int_{\mathbbm{R}}\eta_{s_0} \neq 0$. It must exist; if not, the generating $S(\V \eta)$ would only contains zero-mean functions and could not reproduce the B-splines that are not zero-mean. A shortest-support basis $\boldsymbol{\theta} = (\theta_1,...,\theta_N)$ is then given by
\begin{equation}
\theta_s = \begin{cases}
            H_s& \text{if } s\neq s_0 \text{ and } \int_{\mathbbm{R}}\eta_{s}(t)\dint t = 0\\
            H_s-H_{s_0}& \text{if }  s\neq s_0\text{ and } \int_{\mathbbm{R}}\eta_{s} (t)\dint t\neq 0\\
            \Delta H_{s_0}& s = s_0
        \end{cases}
\end{equation}
Because $\boldsymbol{\eta}$ is compactly supported and standardized, the choice of $s_0$ ensures that
\begin{equation}
\suppsize{\theta_s} = \begin{cases}
                    \suppsize{\eta_s}&s\neq s_0\\
                    \suppsize{\eta_{s_0}}+1&s = s_0
                    \end{cases}
\end{equation}
In short, $\suppsize{\boldsymbol{\theta}} = 1 + \suppsize{\boldsymbol{\eta}} = n_N+2$.
Noting that $H_{s_0}=\sum_{k\inN}\theta_{s_0}(\cdot -k)$, it is clear that $\boldsymbol{\theta}$ can reproduce $\boldsymbol{H}$ with causal coefficients. It also implies that $\boldsymbol{\theta}$ can reproduce $(\beta_+^{n_1+1},...,\beta_+^{n_N+1})$ with causal coefficients (see (\ref{reproducecausal})), which justifies the assumption $(A_{\mathbf{n}})$. In conclusion, $\boldsymbol{\theta}$ is a shortest-support basis of  $S_{n_1+1}+\cdots+S_{n_N+1}$.

\subsubsection{Insertion Step}
The present step enables us to add a generator to a shortest-support basis. Suppose $\boldsymbol{\eta} = (\eta_1,...,\eta_{N})$ is a standardized shortest-support basis of $S_{n_1}+\cdots+S_{n_N}$ and let $\boldsymbol{\eta}' = (\delta,\eta_1,...,\eta_{N})$, where $\delta$ is the Dirac distribution. The increment step applied to $\boldsymbol{\eta}' $ yields a shortest-support basis for $S_{0}+S_{n_1+1}+\cdots+S_{n_N+1}$. Indeed, the shortest function of $\boldsymbol{\eta}$ being $\delta$, the new basis $\boldsymbol{\theta}' = (\theta_0',...,\theta_{N}')$ is given by
\begin{equation}
\theta_n':x\mapsto \begin{cases}
           \Delta \{\int_{-\infty}^{x} \delta(t)\dint t\} = \beta_+^0(x), & n=0\\\int_{-\infty}^{x} \eta_n(t)\dint t,& n>0 \text{ and } \int_{\mathbbm{R}}\eta_{n}(t)\dint t = 0\\
             \int_{-\infty}^{x} (\eta_n(t)-\delta(t))\dint t,& n>0\text{ and } \int_{\mathbbm{R}}\eta_{n}(t)\dint t \neq 0 .

        \end{cases}
\end{equation}
Because $\boldsymbol{\eta}$ is compactly supported and standardized, we have that
$$
\suppsize{\theta_n'} = \begin{cases}1,&n=0\\
                    \suppsize{\eta_n},&\text{otherwise,}
                    \end{cases}
$$
which means that $\suppsize{\boldsymbol{\theta}'} = \suppsize{\eta'}+1 = n_N+2$.
The process also ensures that $\boldsymbol{\theta}'$ is a shortest-support basis of $S_{0}+S_{n_1+1}+\cdots+S_{n_N+1}$.

\begin{theorem}
\label{th:existence}
Let $n_1<\cdots<n_N \inN \setminus{\{0\}}$. There exists an mB-spline $\boldsymbol{\eta} = (\eta_1,...,\eta_N)\inLDRN$ of $S_{n_1}+\cdots+S_{n_N}$ that can be constructed recursively with increment and insertion steps.
\end{theorem}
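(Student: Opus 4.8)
The plan is to argue by induction on the number $N$ of generators, using the increment and insertion steps to reduce an arbitrary target degree tuple $\mathbf{n} = (n_1,\ldots,n_N)$ to a smaller instance whose mB-spline is already available. The two building blocks act on degree tuples in a transparent way: the increment step carries an mB-spline of $(n_1,\ldots,n_N)$ to one of $(n_1+1,\ldots,n_N+1)$, shifting every degree up by one while keeping $N$ fixed, whereas the insertion step carries an mB-spline of $(p_1,\ldots,p_{N-1})$ to one of $(0,p_1+1,\ldots,p_{N-1}+1)$, raising $N$ by one and forcing the smallest degree to $0$. Both were shown above to return a standardized shortest-support basis that still reproduces the relevant B-splines with causal sequences, so the standardization and the assumption $(A_{\mathbf{n}})$ required to iterate are preserved along the whole construction.

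First I would settle the base case $N=1$. Here the shortest-support basis of $S_{n_1}$ is the single B-spline $\beta_+^{n_1}$, and the increment step specializes precisely to the classical recursion $\beta_+^{m+1} = \Delta\{\int_{-\infty}^{\cdot}\beta_+^{m}\}$. Starting from $\beta_+^0$, which is causal, standardized, and reproduces itself through the causal sequence $\delta[\cdot]$, applying the increment step $n_1$ times yields $\beta_+^{n_1}$. This anchors the induction and disposes of every single-generator space.

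For the inductive step I assume the statement for $N-1$ generators and fix a target $(n_1,\ldots,n_N)$ with $0 \le n_1 < \cdots < n_N$; I prove this slightly more general statement allowing a vanishing smallest degree, since the intermediate spaces produced by insertion have this form, and the theorem's hypothesis $n_1\ge 1$ is simply the case in which no degree equals $0$. I would first normalize to $n_1=0$: the target is obtained from an mB-spline of $(0,\,n_2-n_1,\ldots,\,n_N-n_1)$ by applying the increment step $n_1$ times, so it suffices to construct the latter. Writing this normalized tuple as $(0,m_2,\ldots,m_N)$ with $1\le m_2 < \cdots < m_N$, I observe that it is exactly the output of one insertion step applied to an mB-spline of the $(N-1)$-tuple $(m_2-1,\ldots,m_N-1)$, whose entries satisfy $0\le m_2-1 < \cdots < m_N-1$. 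By the induction hypothesis such an mB-spline exists and is constructible, and one insertion step followed by $n_1$ increment steps then produces the desired mB-spline of $(n_1,\ldots,n_N)$. Unwinding the recursion exhibits the explicit finite sequence of steps starting from $\beta_+^0$.

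The genuine mathematical content, namely that each step really returns a shortest-support basis, has already been discharged in the increment and insertion paragraphs, so no deep obstacle remains in the theorem itself. The point that requires the most care is the bookkeeping that keeps the induction well founded: one must check that every intermediate degree tuple stays strictly increasing with nonnegative entries, which both the decrement-to-zero normalization and the insertion predecessor respect, and that standardization together with the causal-reproduction assumption $(A_{\mathbf{n}})$ is inherited at each step, so that the derivation underlying the increment step stays valid. Verifying that these invariants hold at $\beta_+^0$ and are propagated by both operations is the only place where a slip could break the argument.
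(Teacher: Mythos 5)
Your proof is correct and matches the paper's argument in substance: the paper constructs the basis bottom-up, starting from the single B-spline $\beta_+^{n_N-n_{N-1}-1}$ and alternating insertion steps with blocks of increment steps, which is exactly the sequence of operations your induction on $N$ produces once unwound. The only difference is presentational (top-down induction versus explicit forward iteration), and your explicit attention to the invariants---strictly increasing nonnegative degree tuples, standardization, and causal reproduction under $(A_{\mathbf{n}})$---makes precise some bookkeeping that the paper leaves implicit.
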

\begin{proof}
The increment and insertion steps are sufficient to construct an mB-spline for any multi-spline space. Indeed, take $\boldsymbol{\eta_0} = (\beta_+^{n_N-n_{N-1}-1})$ a shortest support basis for $S_{n_N-n_{N-1}-1}$. The insertion step gives a shortest-support basis for $S_0+S_{n_N-n_{N-1}}$. After $(n_{N-1}-n_{N-2}-1)$ increment steps and one insertion step, the process gives a shortest-support basis for $S_0+S_{n_{N-1}-n_{N-2}}+S_{n_N-n_{N-2}}$. By iteration, a shortest-support basis for $S_0+S_{n_2-n_1}+\cdots+S_{n_N-n_1}$ is obtained. Applying $n_1$ increment steps, we finally obtain a shortest-support basis for $S_{n_1}+\cdots+S_{n_N}$
\end{proof}

Examples of mB-splines will be provided in Section \ref{sec:Applications}. 
Note that our algorithm does not always output functions with the most practical form. This is corrected by appropriate linear combinations and, possibly, translations that do not alter the reproducing properties and the support size. For instance, for the space $S_2+S_3$, our construction will need a simple linear combination to obtain the wellknown bicubic Hermite splines.
We conclude this section with a result on the minimal number of generating functions required to generate multi-spline spaces.

\begin{theorem}
\label{prop:MN}
Let $n_1<\cdots<n_N\inN\setminus{\{0\}}$. The space $S_{\mathbf{n}} = S_{n_1}+\cdots+S_{n_N}$ cannot be generated by fewer than $N$ compactly supported generating functions.
\end{theorem}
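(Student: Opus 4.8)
The plan is to obtain the bound directly by combining the \emph{existence} of an $N$-generator shortest-support basis (Theorem~\ref{th:existence}) with the counting Lemma~\ref{lm:MinimalGeneratingFunctions}. By Theorem~\ref{th:existence}, the space $S_{\mathbf n} = S_{n_1}+\cdots+S_{n_N}$ admits an mB-spline $\V\eta = (\eta_1,\ldots,\eta_N) \in (\LDR)^{N}$, that is, a shortest-support basis assembled from exactly $N$ compactly supported generators. Now let $\V\psi = (\psi_1,\ldots,\psi_M) \in (\LDR)^{M}$ be an \emph{arbitrary} collection of compactly supported functions with $\shiftspace{\V\psi} = S_{\mathbf n}$; the goal is to prove $M \ge N$, which is precisely the assertion of the theorem.

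First I would check that $\V\psi$ and $\V\eta$ reproduce each other, so that the hypotheses of Lemma~\ref{lm:MinimalGeneratingFunctions} are satisfied. One direction is immediate: since $\V\eta$ is a shortest-support basis it generates a Riesz basis (Theorem~\ref{MinimalSupport_RB}), so $\shiftspace{\V\eta}$ consists exactly of the square-summable expansions of shifts of $\V\eta$; as $\psi_m \in \shiftspace{\V\psi} = \shiftspace{\V\eta}$, each $\psi_m$ has an expansion $\psi_m = \sum_{k\inZ}\V b_m[k]^T\V\eta(\cdot-k)$, which stacked together reads $\V\psi = \V B * \V\eta$. For the reverse direction I must express each $\eta_n \in S_{\mathbf n} = \shiftspace{\V\psi}$ as $\eta_n = \sum_{k\inZ}\V c_n[k]^T\V\psi(\cdot-k)$ with some (not necessarily $\lDZ$) sequence, i.e.\ $\V\eta = \V C*\V\psi$.

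Once mutual reproduction is in place, the conclusion is instantaneous: Lemma~\ref{lm:MinimalGeneratingFunctions}, applied with the shortest-support basis $\V\eta$ and the candidate generators $\V\psi$, gives $M \ge N$. Since $\V\psi$ was an arbitrary compactly supported generating collection, $S_{\mathbf n}$ cannot be generated by fewer than $N$ compactly supported functions.

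I expect the reverse reproduction step to be the main obstacle, since the membership $\eta_n \in \shiftspace{\V\psi}$ is a priori only an $L_2$-closure statement and must be upgraded to a genuine expansion with a well-defined reproducing sequence. The way I would handle it is local and finite-dimensional: on each unit interval $[k,k+1)$ the restriction of $S_{\mathbf n}$ is finite-dimensional and is spanned by the finitely many shifts of $\V\psi$ whose supports meet that interval (all generators being compactly supported), so $\eta_n$ lies in each local span; propagating the local coefficients from left to right along the integer knots, and using the compact support of $\V\psi$, then assembles these local data into a single global sequence $\V c_n$ that reproduces $\eta_n$ everywhere. This is exactly the type of reproduction—with sequences allowed to leave $\lDZ$—that Lemma~\ref{lm:MinimalGeneratingFunctions} is built to accept, so no extra regularity of $\V\psi$ (such as being a Riesz basis, or even linearly independent) is required.
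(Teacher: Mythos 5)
Your proposal is correct and follows exactly the paper's own argument: invoke Theorem~\ref{th:existence} to obtain an $N$-generator mB-spline $\V\eta$ of $S_{\mathbf n}$, observe that $\V\eta$ and an arbitrary compactly supported generating collection $\V\psi$ reproduce each other, and apply Lemma~\ref{lm:MinimalGeneratingFunctions} to get $M\ge N$. The only difference is that you spell out the mutual-reproduction step (which the paper asserts in one line), and your local, interval-by-interval justification of it is a reasonable way to fill that in.
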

\begin{proof}
From Theorem \ref{th:existence}, there exists an mB-spline of $S_{\mathbf{n}}$ composed of $N$ functions, say, $\V \eta=(\eta_1,\ldots,\eta_N)\in (S_{\mathbf{n}})^N$. Let $\V \psi = (\psi_1,...,\psi_M)\in (S_{\mathbf{n}})^M$ be a collection of compactly supported functions able to generate $S_{\mathbf{n}}$. It means that $\V \eta$ and $\V \psi$ can reproduce each other and, by Lemma \ref{lm:MinimalGeneratingFunctions}, $M\geq N$.
\end{proof}
Note that $N$ is a lower bound and the number of generating function of a shortest-support basis can exceed $N$. For instance, take $\V \eta = (\eta_1,\eta_2)$ with
\begin{align}
\eta_1&:x \mapsto \beta_0(2x) = \mathbbm{1}_{[0,1/2)}(x)\\
\eta_2&:x \mapsto \beta_0(2(x-1/2))  = \mathbbm{1}_{[1/2,1)(x)}
.\end{align}
Since $\eta_1+\eta_2 = \beta_0$, $\V \eta$ can reproduce $S_0$. In addition, the fact that $\suppsize{\V \eta} = 1$ means that it is a shortest-support basis of degree 0 and now it is composed of two generating functions. (Note that the space they generate is larger than $S_{0}$).

\section{Applications}
\label{sec:Applications}
\subsection{Generalized Sampling in Multi-Spline Spaces}
We consider a multi-spline space $S_{\mathbf{n}}$ along with the $N$-component mB-spline $\boldsymbol{\phi}=(\phi_1,\ldots,\phi_N)$ and some corresponding analysis functions $\boldsymbol{\psi} = (\psi_1,\ldots,\psi_N)$. As we now show, the generalized-sampling formulation presented in \cite{Unser1998} can be extended to multiple generators. Let $\mathcal{H}$ be a space considerably larger than $\shiftspace{\boldsymbol{\phi}}$. Consider $f\in \mathcal{H}$, from which we know only some discrete measurements $(\boldsymbol{g}[n])_{n\inZ}$ written
$$
\boldsymbol{g}[n] =\dotprod{\boldsymbol{\psi}(\cdot-n)}{f} = (\dotprod{\psi_1(\cdot-n)}{f},...,\dotprod{\psi_N(\cdot-n)}{f})
.$$
To construct an approximation $\Tilde{f}\in \shiftspace{\boldsymbol{\phi}}$ of $f$, a standard way is to enforce consistency \cite{Unser1994,Unser1997a}, in the sense that $f$ and $\Tilde{f}$ must give the same measurements. This formulation generalizes the notion of interpolation. For instance, to interpolate the value of $f$ and its derivative at the sampling locations, take $\psi_1 = \delta$ and $\psi_2 = \delta^{'}$. In such a case, consistency simply means that $f$ and $\Tilde{f}$ should have the same value and the same derivative at the grid points. In general, the consistency requirement translates into
\begin{align}
\dotprod{\boldsymbol{\psi}(\cdot-n)}{f} &= \dotprod{\boldsymbol{\psi}(\cdot-n)}{\tilde{f}} \nonumber\\
     &= \sum_{k \inZ}\dotprod{\boldsymbol{\psi}(\cdot-n)}{\boldsymbol{\phi}^T(\cdot-k)}\cdot \boldsymbol{c}[k] \nonumber\\
    &= \sum_{k \inZ}\dotprod{\boldsymbol{\psi}(\cdot-(n-k))}{\boldsymbol{\phi}^T}\cdot \boldsymbol{c}[k] \nonumber\\
    & = (\boldsymbol{A}_{\boldsymbol{\Phi \Psi}} * \boldsymbol{c})[n]
\end{align}
where $(\boldsymbol{c}[n])_{n\inZ}$ is the unique vector sequence representing $\Tilde{f} = \sum_{k\inZ}{\V c}[k]^T {\V \phi}(\cdot-k)$ and $\boldsymbol{A}_{\boldsymbol{\Phi \Psi}}[n] = \dotprod{\boldsymbol{\psi}(\cdot-n)}{\boldsymbol{\phi}^T(\cdot)}$ is the matrix-valued sequence of the measurements of the basis functions. To solve our problem, we rely on the theory of signal and systems, including the z-transform. Indeed, with this framework efficient implementation techniques naturally stand out. When the matrix-valued filter $\boldsymbol{A}_{\boldsymbol{\Phi \Psi}}$ is invertible (see  \cite[Proposition 1]{Unser1998} for the invertibility condition), the vector ${\V c}$ of sequences can be computed from the measurements by applying the matrix-valued inverse filter $\V Q$, like in
\begin{equation}
    \V c [n] = (\V Q *\V g)[n]
.\end{equation}
Its transfer function verifies in the z-domain
$
    \hat{\V Q}(z) =  \hat{\V A}_{\V \Phi \V\Psi}^{-1}(z)
$. This matrix filter has not necessarily a finite impulse response (FIR) but it can be decomposed as $\hat{\V Q}(z)= \frac{1}{\det \hat{\V A}_{\V \Phi \V\Psi}(z)}\mathrm{com} (\hat{\V A}_{\V \Phi \V\Psi}(z))^T$, where $\mathrm{com} (\hat{\V A}_{\V \Phi \V\Psi})$ denotes the cofactor matrix of $\hat{\V A}_{\V \Phi \V\Psi}$. For compactly supported analysis functions, the comatrix $\mathrm{com} (\hat{\V A}(z))$ is FIR because it is a Laurent polynomial in $z$, so it is straightforward to implement. On the contrary, $\frac{1}{\det \hat{\bold{A}}_{\V \Phi \V\Psi}(z)}$ is often not FIR. Nonetheless, it can usually be implemented efficiently too, using the same techniques as in \cite{Unser1993a}.

\paragraph{Online Interactive Tutorial}
Some examples are implemented in an online interactive demo  \footnote{\url{https://bigsplinesepfl.github.io/}}, a screenshot being provided in Figure \ref{fig:demoDerivativeSampling}. The user can control the discrete measurements of a function (value, derivative), choose a multi-spline reconstruction space, and see in live the reconstructed function.

\subsection{Derivative Sampling with High-Degree Multi-Splines in $S_{2p}+S_{2p+1}$}
The derivative sampling problem reads for $f\in\mathcal{H}$
\begin{equation}
\label{DerivativeSampling}
\text{find } \tilde{f}\in S_{\mathbf{n}}: \begin{cases}\tilde{f}(k)=f(k)\\\tilde{f}^{'}(k)=f^{'}(k)\end{cases}, k\inZ
.\end{equation}
The most relevant reconstruction spaces have the form $S_{\mathbf{n}} = S_{2p}+S_{2p+1}$. The underlying reason is that the filter complexity is the same for the spaces $S_{2p}+S_{2p+1}$ and $S_{2p-1}+S_{2p}$, so, the higher degree is preferred (the filter has $2(p-1)$ roots). Note that the same occurs when one performs classical interpolation with B-splines and  odd degrees are usually preferred. To the best of our knowledge, when $p>1$, no solution based on shortest-support bases and recursive filtering has been proposed so far. Our construction of shortest-bases results in the functions $\eta_1$ and $\eta_2$. They have a support size $(p+1)$ and are plotted in Figure \ref{fig:derivativeSampling}. Due to the symmetry properties of those functions, the entries of $\boldsymbol{\hat{A}}_{\boldsymbol{\Phi \Psi}}(z)$ have poles that come in reciprocal pairs. Consequently, the inverse matrix filter can be implemented with efficient recursive techniques, as detailed in \cite{Unser1993,Unser1993a}.
\\The case of quintic-degree derivative sampling is detailed now. The basis functions are specified in Table \ref{tab:derivativesampling}.
\begin{figure}[t!]
\begin{minipage}{1.0\linewidth}
  \centering
  \centerline{\includegraphics[width=\linewidth]{./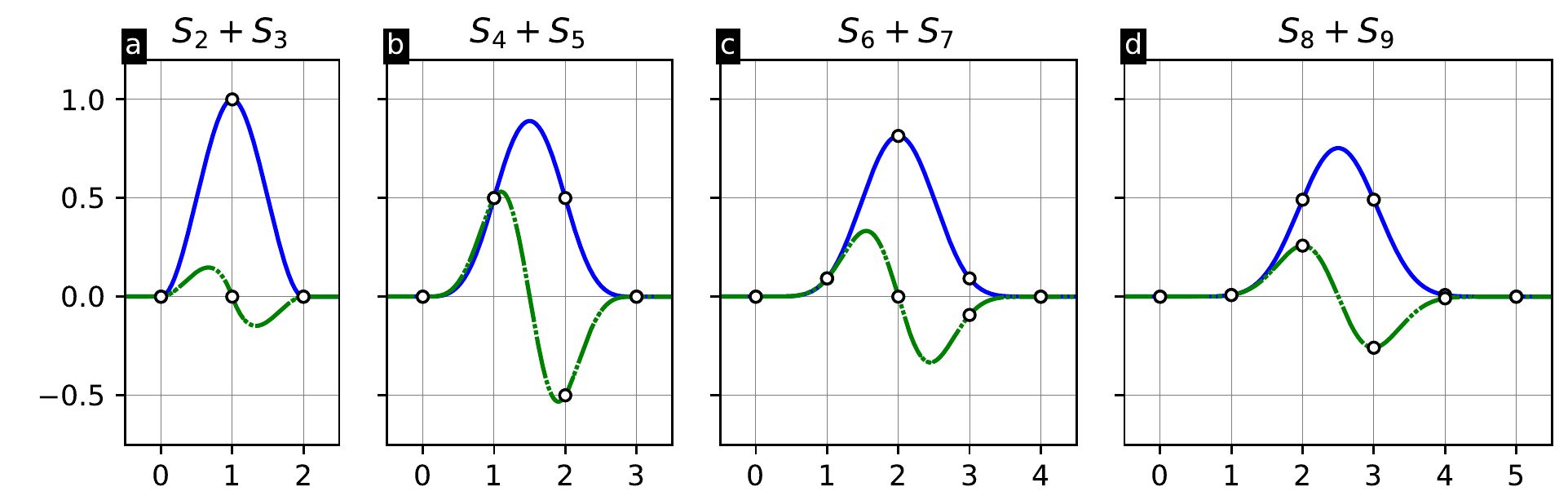}}
  \caption{Shortest-support bases for derivative sampling, obtained with the shortest-basis algorithm and some linear combinations to get a symmetric and an antisymmetric function. (a) The well-known bicubic Hermite splines. (b)-(c)-(d) New bases for derivative sampling with high-degree splines. These functions are piecewise polynomials of degree 5, 7, 9 with continuity of the derivatives of order 3, 5, 7, respectively.}
  \label{fig:derivativeSampling} \medskip
\end{minipage}
\end{figure}
\begin{table}[ht]
\centering
{\renewcommand{\arraystretch}{1}%
\begin{tabular}{l |l|l|r r r r r r r r r r}
\bottomrule\bottomrule
  & & slice $\#$&$x_k^0$&$x_k^1$&$x_k^2$&$x_k^3$&$x_k^4$&$x_k^5$&$x_k^6$&$x_k^7$\\
\hline \multirow{4}{*}{$S_2+S_3$}&\multirow{2}{*}{$\eta_1$} & $k=0$ &&&-3&2&&&&\\
&&$k=1$&1&&-3&1&&&&\\
\cline{2-11}
&\multirow{2}{*}{$\eta_2$} & $k=0$ &&&-1&1&&&&\\
&&$k=1$&&1&-2&1&&&&\\
\hline\multirow{6}{*}{$S_4+S_5$}&\multirow{3}{*}{4$\eta_1$} & $k=0$ &&&&&5&-3&&\\
 & &$k=1$&2&5&&-10&5&&&\\
 & &$k=2$&2&-5&&10&-10&3&&\\
 \cline{2-11}&  \multirow{3}{*}{8$\eta_2$} & $k=0$ &&&&&15&-11&&\\
 & &$k=1$&4&5&-20&-50&95&-38&&\\
 & &$k=2$&-4&5&20&-50&40&-11&&\\
 \hline\multirow{8}{*}{$S_6+S_7$}&\multirow{4}{*}{$108\eta_1$}&$k=0$&&&&&&&21&-11\\
&&$k=1$&10&49&84&35&-70&-105&112&-27\\
&&$k=2$&88&&-168&&140&&-77&27\\
&&$k=3$&10&-49&84&-35&-70&105&-56&11\\
 \cline{2-11}&\multirow{4}{*}{$\frac{918}{5}\eta_2$}&$k=0$&&&&&&&42&-25\\
&&$k=1$&17&77&105&-35&-245&-273&539&-185\\
&&$k=2$&&-224&&560&&-924&756&-185\\
&&$k=3$&-17&77&-105&-35&245&-273&133&-25\\ \toprule \toprule
\end{tabular}
}
\caption{Slices of shortest-support bases for derivative sampling. The slices are given as linear combinations of the shifted monomials $x_k^n = (x-k)^n$ if $x\in [k,k+1)$ and $x_k^n = 0$ otherwise.}
\label{tab:derivativesampling}
\end{table}
The z-transform of the filter $\boldsymbol{\hat{A}}_{\boldsymbol{\Phi \Psi}}(z)$ reads
\begin{equation}
\boldsymbol{\hat{A}}_{\boldsymbol{\Phi \Psi}}(z) = \begin{bmatrix}
\frac{z^{-1}+z^{-2}}{2}&\frac{z^{-1}-z^{-2}}{2}\\
\frac{5(z^{-1}-z^{-2})}{4}&\frac{5(z^{-1}+z^{-2})}{8}
\end{bmatrix}
.\end{equation}
It follows that the transpose comatrix satisfies
\begin{equation}
\rm com (\hat{\bold{A}}(z))^T \quad \xleftrightarrow{ \quad z \quad} \quad \frac{1}{2} \begin{bmatrix}
\frac{5(\delta[\cdot-1]+\delta[\cdot-2])}{4}&-\delta[\cdot-1]+\delta[\cdot-2]\\
-\frac{5(\delta[\cdot-1]-\delta[\cdot-2])}{2}&{\delta[\cdot-1]+\delta[\cdot-2]}
\end{bmatrix} 
\end{equation}
and the determinant
\begin{equation}
\frac{z^{-1}}{\det \hat{\bold{A}}(z)}=\frac{16}{5}\frac{-z}{(1-z_0z^{-1})(1-z_0^{-1}z^{-1})} \quad \xleftrightarrow{ \quad z \quad} \quad d[n],
\end{equation}
where $z_0 = (3-2\sqrt{2})$. This means that the convolution of any sequence with $d$ can be implemented recursively. Interestingly, it is the same inverse filter as in cubic-spline interpolation. The reader can therefore refer to \cite{Unser1999} for a detailed explanation of the implementation.
The expansion coefficients can be evaluated as
\begin{align}
c_1&= d*\left (\frac{5}{8}\Delta^+\{f\}-\frac{1}{2}\Delta \{f^{'}\}\right) \nonumber \\
c_2 &= 
d*\left(-\frac{5}{4}\Delta \{f\} + \frac{1}{2}\Delta^+ \{f^{'}\}\right)
,\end{align}
where  $\Delta^+ \{f\} [k] = f[k]+f[k-1]$.  Finally, the multi-spline that is consistent with the measurements is given by
\begin{equation}
\tilde{f}(x) = \sum_{k\inZ}c_1[k]\eta_1(x-k)+\sum_{k\inZ}c_2[k]\eta_2(x-k)
.
\end{equation}
\begin{figure}[t!]
\begin{minipage}{1.0\linewidth}
  \centering
  \centerline{\includegraphics[width=\linewidth]{./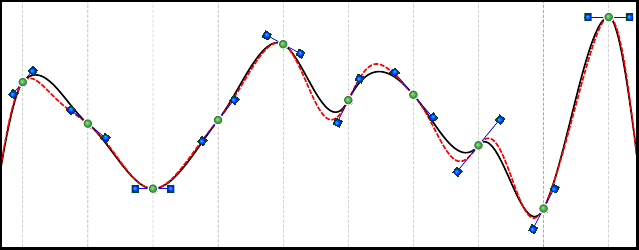}}
  \caption{Derivative sampling with optimal bases. The solid curve lies in $S_2+S_3$ (cubic piecewise polynomials with continuous derivative) and the dashed curve lies in $S_4+S_5$ (quintic piecewise polynomials with continuous third derivative).}
  \label{fig:demoDerivativeSampling} \medskip
\end{minipage}
\end{figure}

\subsubsection{Derivative Sampling in $S_2+S_3+S_{4}$}
Here, we consider the setting $\V \psi = (\delta,\delta^{'},\delta(\cdot-1/2))$, which means that the value of the function to be reconstructed is sampled twice more often than its derivative. The specification of $S_2+S_3+S_4$ as reconstruction space provides then an explicit interpolation formula, which involves the shortest-support basis $\V \eta$, plotted in Figure \ref{fig:samplingS2S3S4}. This formula reads
\begin{equation}
\tilde{f}(x) = \sum_{k\inZ}\left ( f(k)\eta_1(x-k)+f'(k)\eta_2(x-k) +f(k+1/2)\eta_3(x-k)\right ).
\end{equation}
\begin{figure}[t!]
\begin{minipage}{1.0\linewidth}
  \centering
  \centerline{\includegraphics[width=\linewidth]{./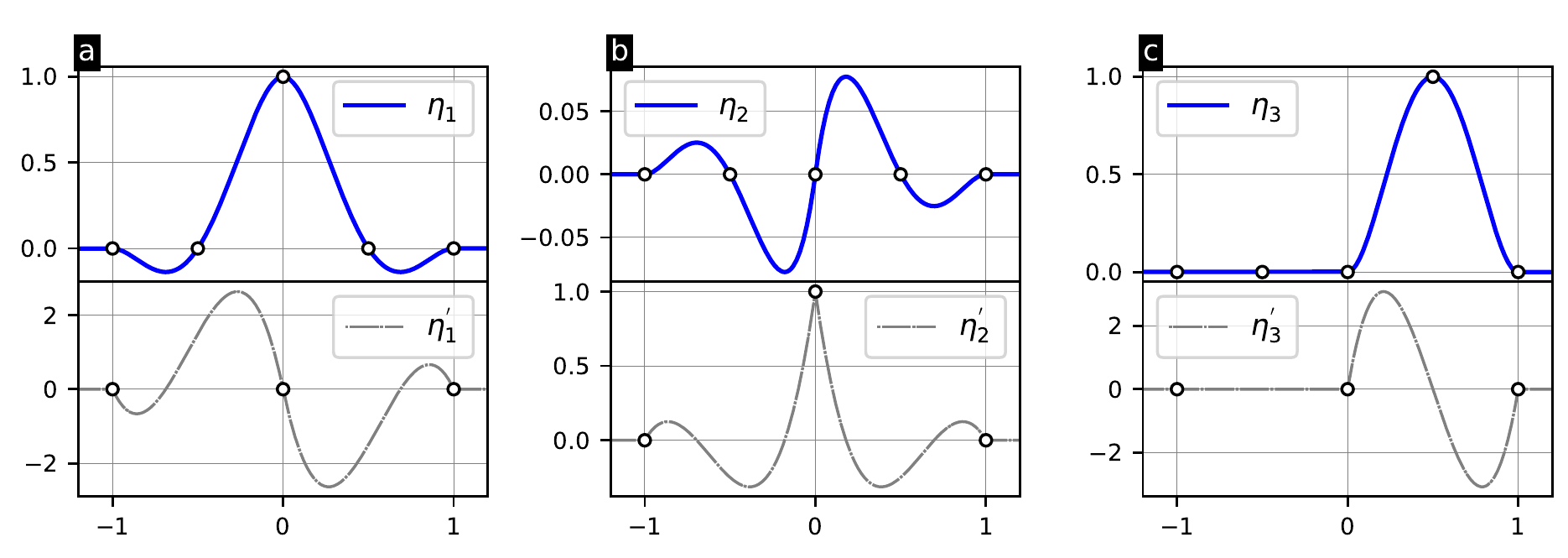}}
  \caption{Shortest basis of $S_2+S_3+S_4$ associated to the analysis
  functions $\V \psi = (\delta,\delta^{'},\delta(\cdot-1/2))$.}
  \label{fig:samplingS2S3S4} \medskip
\end{minipage}
\end{figure}
More generally, we observed that the addition of $N$ consecutive spline spaces to $S_2+S_3$ ({\it i.e.}, choosing $S_{2}+S_{3}+\cdots+S_{3+N}$) allows one to perform derivative sampling and interpolate the function $N$ times between the integers with a direct interpolation formula.
\subsubsection{Direct Derivative Sampling in $S_2+\cdots+S_{2p+1}$}
The space $S_2+S_3+S_4+S_5$ is also well suited for derivative sampling with $\psi = (\delta,\delta',\delta(\cdot-1/2),\delta'(\cdot-1/2))$ because of the structure of its shortest-support generating functions $\eta_1,\eta_2,\eta_3$, and $\eta_4$ (Figure \ref{fig:doubleDerivativeSampling}). Indeed, it yields the direct interpolation formula
\begin{equation}
\tilde{f}(x) = \sum_{k\inZ} \left (f(k+1/2)\eta_1(x-k)+f(k)\eta_2(x-k+1) +f'(k+1/2)\eta_3(x-k)+f'(k)\eta_4(x-k+1) \right )
.
\end{equation}
The sampling step is $1/2$, but the spline knots are still located at the integers. Note that the sampling step can be tuned at will by dilation of the generating functions. More generally, we conjecture that there exist basis functions with the interpolatory property for any space of the form $S_2+\cdots+S_{2p+1}$ and the sampling step $1/p$. This conjecture was verified for $p=1$ (bicubic Hermite splines), $p=2$ (Figure \ref{fig:doubleDerivativeSampling}) and $p\in\{3,4\}$.
\begin{figure}[t!]
\begin{minipage}{1.0\linewidth}
  \centering
  \centerline{\includegraphics[width=\linewidth]{./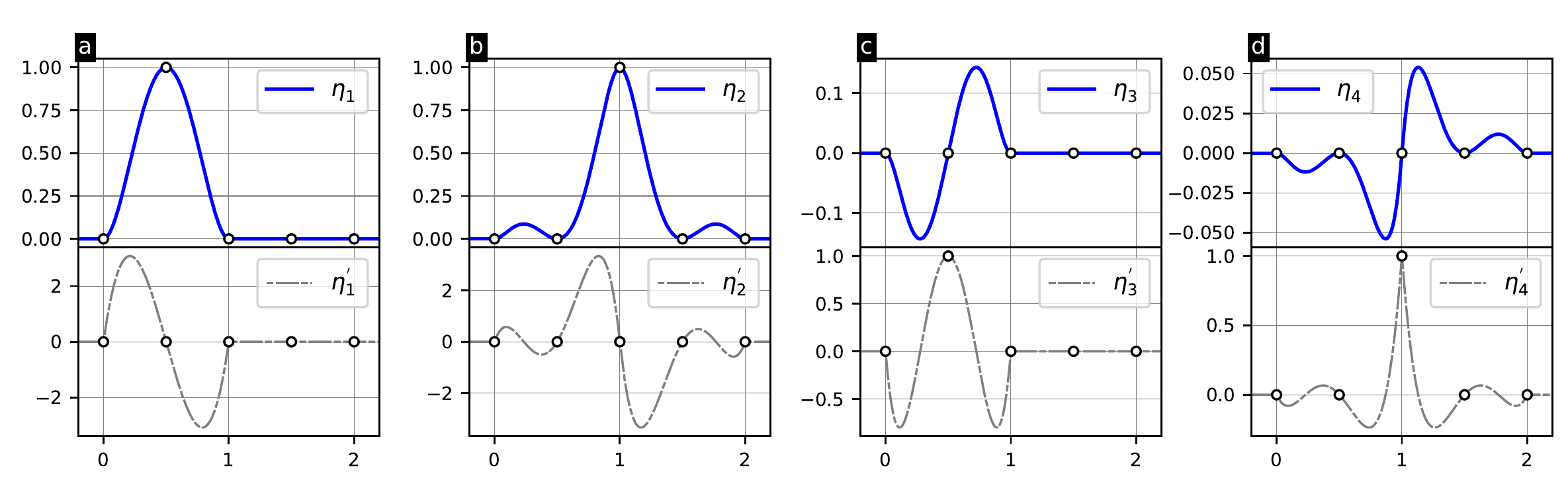}}
  \caption{Shortest basis of $S_2+S_3+S_4+S_5$ for direct derivative sampling.}
  \label{fig:doubleDerivativeSampling} \medskip
\end{minipage}
\end{figure}

\subsection{Classical Interpolation}
The classical interpolation problem reads for $f\in\mathcal{H}$
\begin{equation}
\label{ClassicalInterpolation}
\text{find } \tilde{f}\in S_{\mathbf{n}}: f(k)=\tilde{f}(k), k\inZ
.\end{equation}
When the number $N$ of generating functions is greater than 1, we have two equivalent options:
\begin{enumerate}[label=(\roman*)]
\item to sample the function $f$ with the sampling step $1/N$;
\item to dilate the generators by a factor of $N$, keeping a unit sampling step.
\end{enumerate}
We present the result in accordance with Option (i).

\subsubsection{Modified Lagrange Polynomials in $S_1+\cdots+S_N$}
Classical interpolation is well solved by B-splines but, starting from degree 2, the filter is neither FIR nor causal. Exact operations such as local interpolation or interpolation with a finite delay are therefore not possible. Some workarounds exist \cite{Petrinovic2008}; we present now one that is based on modified Lagrange polynomials. Let $\V l= (l_1,\ldots,l_N)$ be a collection of $N$ generating function such that, for $x\in[0,1]$,
$l_q(x)=\prod_{\substack{p = 0\\p\neq q}}^{N}\frac{Nx-p}{q-p}$. In this way, when $q=1,\ldots,(N-1)$, $l_q$ is zero at $x=0$ and $x=1$ so it can be set to zero for $x\not\in[0,1]$ and $l_q\in S_1$. Noting that $l_N(1)=1$, to make sure that $l_N\in S_1$, we extend its support to $[1,2]$ and set, $\forall x\in [1,2]$, $l_N(x)=l_N(2-x)$ (see Figure \ref{fig:modifiedLagrange}).
These functions constitute a shortest-support basis of $S_1+\cdots+S_N$ and give a direct interpolation formula. Interestingly, those basis functions are sometimes used for finite-element methods \cite{Langtangen2019}.
\begin{figure}[t!]
\begin{minipage}{1.0\linewidth}
  \centering
  \centerline{\includegraphics[width=\linewidth]{./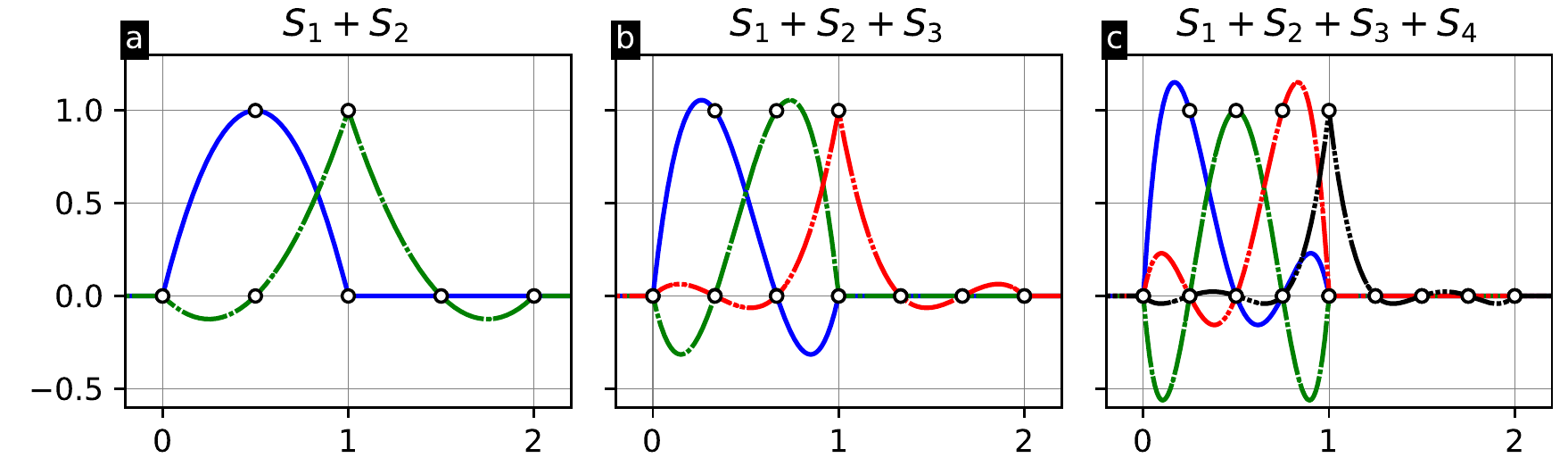}}
  \caption{Shortest-support basis for $S_1+\cdots+S_N$. The basis functions are continuous and able to reproduce any polynomial of degree up to $N$.}
  \label{fig:modifiedLagrange} \medskip
\end{minipage}
\end{figure}

\subsubsection{Bi-Spline Classical Interpolation in $S_{2p+1}+S_{2p+2}$}
A bi-spline is the sum of two splines of different degrees, and it can be used to perform classical interpolation. In particular, interpolation in the reconstruction space $S_{\mathbf{n}} = S_{2p+1}+S_{2p+2}$ leads to a filter with $p$ pairs of reciprocal roots. In terms of filtering, it has therefore the same complexity as for the interpolation inverse filter associated with the single space $S_{2p+1}$. Shortest-support  basis functions for such spaces are plotted in Figure \ref{fig:bisplineInterpolation}.
\begin{figure}[t!]
\begin{minipage}{1.0\linewidth}
  \centering
  \centerline{\includegraphics[width=\linewidth]{./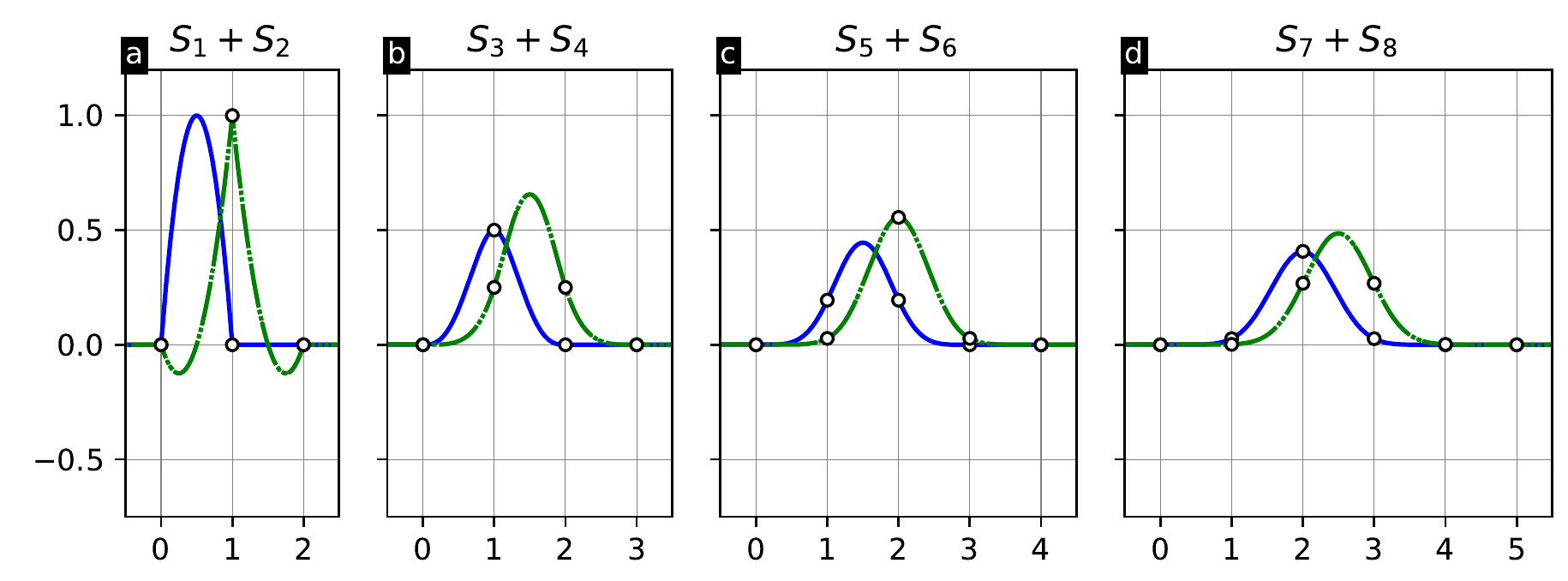}}
  \caption{Shortest bi-spline bases for classical interpolation with a half-integer sampling step. (a) In $S_1+S_2$, the functions presented give a direct interpolation formula. (b) (c) (d) The functions are piecewise polynomials of degree 4, 6, 8 with continuity of the derivatives of order 2, 4, 6 respectively. To perform interpolation, a filter with 2, 4, 6 roots respectively has to be inverted.}
  \label{fig:bisplineInterpolation} \medskip
\end{minipage}
\end{figure}
We now detail how this interpolation is performed for $S_3+S_4$, keeping in mind that the other cases are similar.
The z-transform of the filter $\boldsymbol{\hat{A}}_{\boldsymbol{\Phi \Psi}}(z)$ reads
\begin{equation}
\boldsymbol{\hat{A}}_{\boldsymbol{\Phi \Psi}}(z) = \begin{bmatrix}
\frac{z^{-1}}{2}&\frac{z^{-1}+z^{-2}}{4}\\
\frac{5(z^{-1}+z^{-2})}{32}&\frac{5(z^{-1}+z^{-3})+210z^{-2}}{320}
\end{bmatrix}
,\end{equation}
while the z-transform of the inverse filter can be decomposed as
\begin{equation}
\hat{\boldsymbol{Q}}(z)  = \hat{p}(z)\times \hat{\boldsymbol{P}}(z),
\end{equation}
where
\begin{equation}
\hat{\boldsymbol{P}}(z) = \begin{bmatrix}
\frac{5(1+z^{-2})+210z^{-1}}{320}&-\frac{1+z^{-1}}{4}\\
-\frac{5(1+z^{-1})}{32}&\frac{1}{2}
\end{bmatrix} 
\end{equation}
and
\begin{equation}
\hat{p}(z)=\frac{32}{(1-z_0z^{-1})(1-z_0^{-1}z^{-1})}
\end{equation}
with $z_0 = (4-\sqrt{15})$.  The final steps are identical to the detailed case of derivative sampling (recursive filtering).
\subsection{B\'ezier Curves  and Computer Graphics in $S_1+S_2+S_3$ and $S_1+S_2$}
In this section, we use our multi-spline formulation to revisit some B\'ezier curves and, in particular, the cubic B\'ezier curves that are popular in computer graphics. Each portion of the curve is a cubic polynomial defined by four control points.
\begin{itemize}
\item Starting point and ending point of the portion.
\item Two handles that control the tangent of the curve at each extremity of the portion.
\end{itemize}
Thus, the value of the function and its left and right derivatives are controlled on the knots. From a multi-spline perspective, any cubic B\'ezier curve lies in the space $S_1+S_2+S_3$. With the well chosen generating functions $\eta_1,\eta_2$, and $\eta_3$ plotted in Figure \ref{fig:bezierMS}, the interpolation formula is explicit and reads
\begin{equation}
\tilde{f}(x) = \sum_{k\inZ}f(k)\eta_1(x-k)+\sum_{k\inZ}f^{'}(k^-)\eta_2(x-k)+\sum_{k\inZ}f^{'}(k^+)\eta_3(x-k)
,
\end{equation}
where $f'(k^-)$ and $f'(k^{+})$ denote the left and right derivatives at $k$, respectively.
Interestingly, $\eta_2$ and $\eta_3$ can be obtained from the bi-cubic Hermite splines, by splitting the antisymmetric function into two functions (see Figure \ref{fig:derivativeSampling} (a)).
It gives a simple interpretation to cubic B\'ezier curves as illustrated in Figure \ref{fig:2DBezier}. Similarly, quadratic B\'ezier curves are also multi-splines, this time associated to the space $S_1+S_2$ (Figure \ref{fig:bezierMS}). 
\begin{figure}[t!]
\begin{minipage}{1.0\linewidth}
  \centering
  \centerline{\includegraphics[width=\linewidth]{./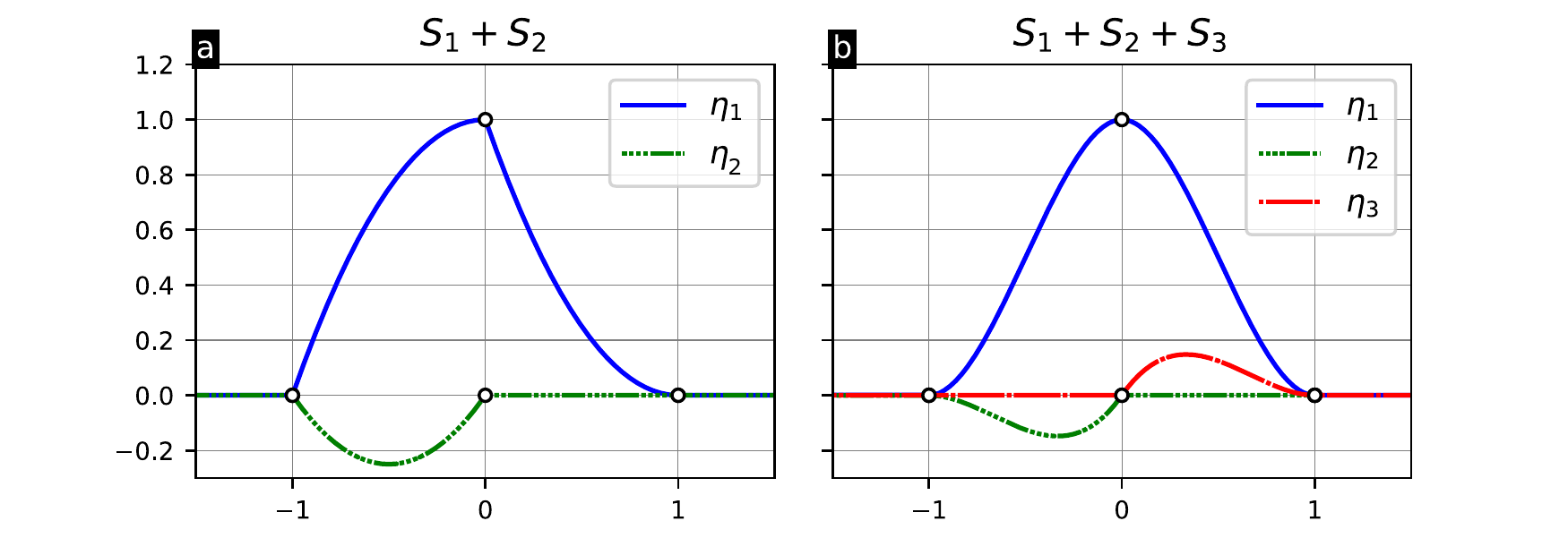}}
  \caption{Shortest-support bases for application in classical computer-graphics. (a) Shortest basis for $S_1+S_2$. The function $\eta_1$ controls the value of the function on the knots while $\eta_2$ controls the left derivative on the knots. These functions reproduce any quadratic B\'ezier curve. (b) Shortest basis for $S_1+S_2+S_3$. The function $\eta_1$ controls the value of the function on the knots while $\eta_2$ and $\eta_3$ control the left and right derivatives, respectively, on the knots. These functions can reproduce any cubic B\'ezier curve with the shortest support. They also give a simple interpretation of such curves.}
  \label{fig:bezierMS} \medskip
\end{minipage}
\end{figure}

\begin{figure}[t!]
\begin{minipage}{1.0\linewidth}
  \centering
  \centerline{\includegraphics[width=100mm]{./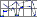}}
  \caption{Screenshot from the online demo. The shortest basis of the space $S_1+S_2+S_3$ allows one to control the value of the function (green dots) and the left/right derivatives (handles). It yields the same curve as with standard vector-graphics editors relying on cubic B\'ezier curves. In this figure, the parametric curves are two-dimensional and the interpolation is performed component-wise.}
  \label{fig:2DBezier} \medskip
\end{minipage}
\end{figure}

\subsection{Nonconsecutive Bi-spline Spaces}
Nonconsecutive multi-spline spaces are relevant to represent signals that have components of different regularity \cite{debarre2019}. For instance, the space $S_0+S_p$, with $p>0$, consists of smooth signals with sharp jumps. In Figure \ref{fig:hybridBiSplines}, we show shortest-support bases of $S_0+S_p$, for $p\in\{2,3,4\}$, that were obtained with our construction algorithm.
\begin{figure}[t!]
\begin{minipage}{1.0\linewidth}
  \centering
  \centerline{\includegraphics[width=0.8\linewidth]{./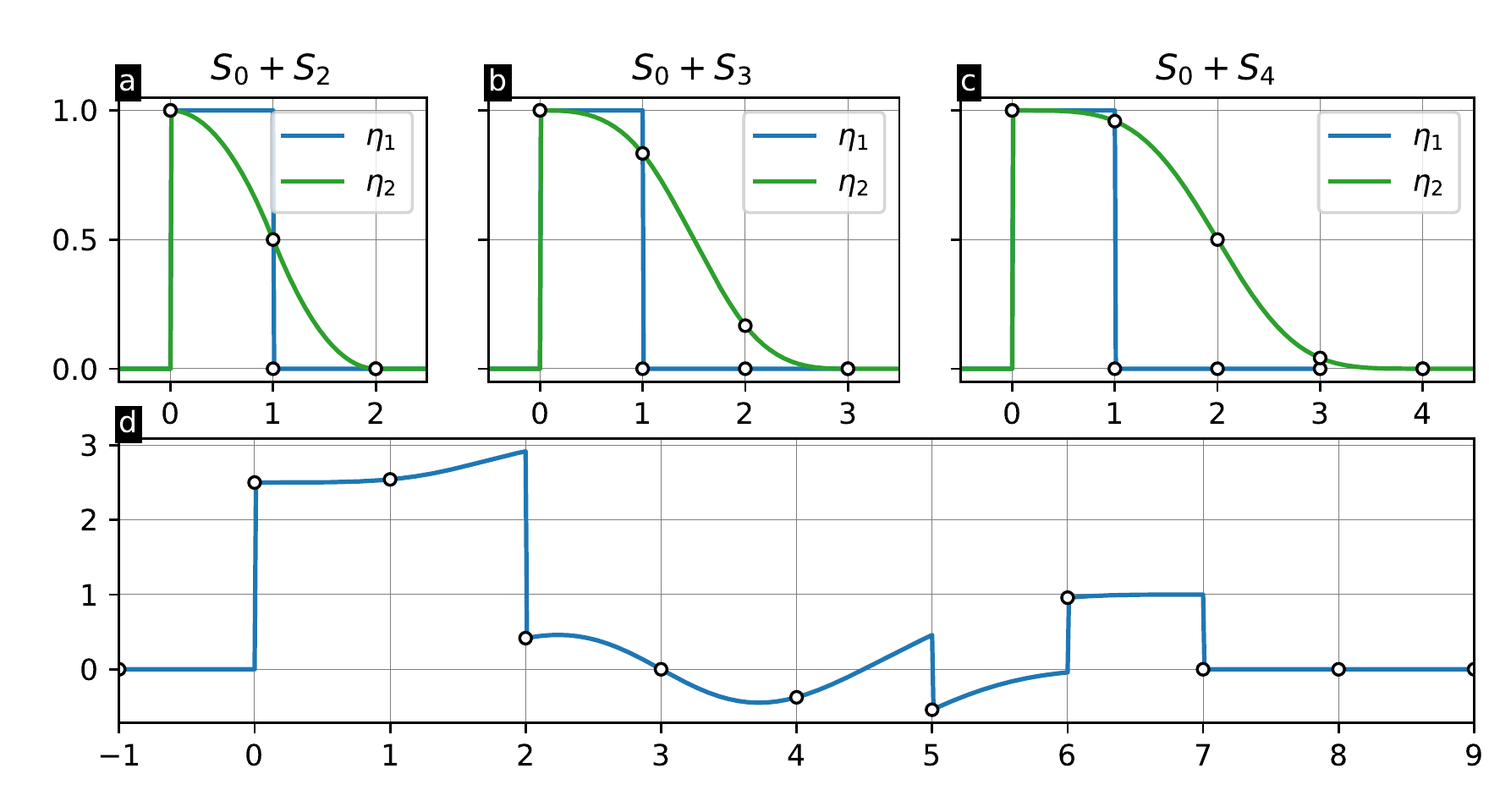}}
  \caption{(a) (b) (c) Shortest-support bases for the spaces $S_0+S_2$, $S_0+S_3$ and $S_0+S_4$. (d) An example of a hybrid bi-spline that lies in the space $S_0+S_4$.}
  \label{fig:hybridBiSplines} \medskip
\end{minipage}
\end{figure}
\section{Conclusion}
In this work, we have introduced the notion of shortest-support bases of degree $M$. They are the shortest-support collections of functions that generate a reconstruction space with an approximation power of order $(M+1)$. We proved that shortest-support bases necessarily generate Riesz bases, a minimal requirement for practical applications. With a single generator, the unique shortest-support basis of degree $M$ is the well-known B-spline of degree $M$. We extended this notion to multiple generators and proposed a recursive method that yields shortest bases for any multi-spline space. These new sets of functions helped us transpose the efficient reconstruction techniques developed for B-splines, and perform generalized sampling. In particular, we have provided a method to perform fast derivative sampling with any approximation power. Finally, we presented a new way to approach some B\'ezier curves.
\newpage
\bibliography{References}

\end{document}